\newtheorem{thm}{Theorem}[section]
\newtheorem{cor}[thm]{Corollary}
\newtheorem{lem}[thm]{Lemma}
\newtheorem{proof}{Proof}
\theoremstyle{definition}
\newtheorem{defn}[thm]{Definition}
\theoremstyle{remark}
\numberwithin{equation}{section}
\begin{document}

\begin{center}{\bf 
A Differential Calculus on the $(h,j)$-Deformed\\  Z$_3$-Graded Superplane}
\end{center}

\begin{center} {\bf Salih Celik and Sultan Celik}\\
Department of Mathematics, Yildiz Technical University, \\ DAVUTPASA-Esenler Istanbul, 34210 TURKEY
\end{center}

\begin{center} {\bf Erhan Cene}\\
Department of Statistics, Yildiz Technical University, \\ DAVUTPASA-Esenler Istanbul, 34210 TURKEY 
\end{center}

{\bf MSC (2010)} Primary 17B37; Secondary 81R60 

{\bf Keywords}: $q$-superplane; Z$_3$-graded quantum superplane; $h$-deformation.
\date{October 22, 2013}

\begin{abstract}
In this work, Z$_3$-graded quantum $(h,j)$-superplane is introduced with a help of proper singular $g$ matrix and a Z$_3$-graded calculus is constructed over this new $h$-superplane. A new Z$_3$-graded $(h,j)$-deformed quantum (super)group is constructed via the obtained calculus.
\end{abstract}

\section{Introduction}

By the end of the twentieth century, quantum groups have started to draw great attention at the fields of mathematics and mathematical physics. Quantum groups was first defined in \cite{9}. After short time, quantum groups were generalized to quantum super groups which leads to an innovative mathematical field \cite{12}, applied this subject to Lie groups and Lie Algebras.

The noncommutative differential geometry of quantum groups was introduced by Woronowicz in \cite{19}. In this approach the quantum group was taken as the basic
noncommutative space and the differential calculus on the group was deduced from the properties of the group. The other approach, initiated by
Wess-Zumino \cite{18}, succeeded to extend Manin's emphasis \cite{15} on the quantum spaces as the primary objects, they defined differential forms in terms
of noncommuting (quantum) coordinates, and the differential and algebraic properties of quantum groups acting on these spaces are obtained from the properties
of the spaces. The natural extension of their scheme to superspace\cite{16} was introduced in \cite{3} and \cite{17}, for example.

Recently, there have been many attempts to generalize Z$_2$-graded constructions to the Z$_3$-graded case [\cite{1},\cite{4},\cite{7},\cite{10},\cite{11},\cite{13},\cite{14}]. Chung\cite{7} studied the Z$_3$-graded quantum space that generalizes the Z$_2$-graded space called a superspace, using the methods of \cite{18}. The first author of this paper investigated the noncommutative geometry of the Z$_3$-graded quantum superplane in \cite{4}. This work will follow the same pattern with one difference. In this work, differential geometry of $h$-deformed Z$_3$-graded quantum superplane is going to be investigated.

In $q$-differential calculus, exterior differential operator {\sf d} has two properties: nilpotency (that is, ${\sf d}^2=0$) and Leibniz Rule. In this work, it is assumed that ${\sf d}^2\neq{0}$ and ${\sf d}^3=0$, while constructing a calculus on Z$_3$-graded $h$-superplane, hence second order differentials are also considered in addition to the relations obtained in q-differential calculus. Thus, while $q$-commutation relations between differentials of coordinate functions and relations among differentials are given in $q$-differential calculus, additional relations will appear in Z$_3$-graded $h$-superplane, since second order differentials should be considered.

In this work, we shall build up the noncommutative differential calculus on the Z$_3$-graded $h$-superplane. The noncommutative differential calculus on the Z$_3$-graded $h$-superplane involves functions on the superplane, first and second differentials and differential forms.

The purpose of this paper is to present a differential calculus on the Z$_3$-graded $h$-superplane. The paper is organized as follows. In section 2 we obtain the Z$_3$-graded $h$-superplane via a contraction of Z$_3$-graded $q$-superplane using approach of \cite{2}. In section 3 we explicitly set up a differential calculus on the Z$_3$-graded $h$-superplane. Some relations are abtained in \cite{6}. In section 4 we find a {\bf new} Z$_3$-graded quantum supergroup denoted by GL$_{h,j}(1|1)$.

\section{The Algebra of Functions on the Z$_3$-graded $h$-Superplane}

It is well known that \cite{16} defined the Z$_2$-graded quantum superplane as an associative algebra whose even coordinate $x$ and the odd (Grassmann) coordinate $\theta$ satisfy
$$x\theta = q \theta x, \qquad \theta^2 = 0$$
where $q$ is a nonzero complex deformation parameter. One of the possible ways to generalize the quantum superplane is to use the power of nilpotency of
its odd generator. This fact gives the motivation for the following definition.

\begin{defn}
Let $K\{x',\theta'\}$ be a free algebra and $I_q$ is a two-sided ideal generated by $x'\theta'-q\theta'x'$ and $\theta'^3$. The Z$_3$-graded quantum superplane $K_q[x',\theta']$ is defined as quotient algebra $K\{x',\theta'\}/I_q$.
\end{defn}

Here, the coordinate $x'$ with respect to the Z$_3$-grading is of grade 0 and the coordinate $\theta'$ with respect to the Z$_3$-grading is of grade 1. Using the approach given in \cite{2}, $h$-deformation of Z$_3$-graded superplane will be described and afterwards a differential calculus on $h$-deformed structure will be constructed.

Recalling Definition 2.1, commutation relations between coordinate functions in Z$_3$-graded superplane can be given as follows
\begin{equation} \label{eq1}
x'\theta' = q\theta'x' \qquad \theta'^3=0.
\end{equation}
We consider a non-singular deformation matrix $g$ which is defined as in \cite{2},
\begin{equation} \label{eq2}
g = \left(\begin{matrix} 1 & 0 \\ \frac{h}{q-1} & 1 \end{matrix}\right)
\end{equation}
where $h$ is a new quantity having grade two. If we assume that,
\begin{equation} \label{eq3}
\left(\begin{matrix} x' \\ \theta' \end{matrix}\right) = g\left(\begin{matrix} x \\ \theta \end{matrix}\right).
\end{equation}
then, new coordinates $x$ and $\theta$ would be,
\begin{equation} \label{eq4}
x = x' \quad \mbox{and} \quad \theta = \theta' - \frac{h}{q-1} \, x'.
\end{equation}
If the relations (\ref{eq1}) is used in order to obtain commutation relation between $x$ and $\theta$ one can easily find the relation,
\begin{equation}\label{eq5}
x\theta = q\theta x+hx^2.
\end{equation}
While obtaining relation (\ref{eq5}) it is assumed that parameter $h$ is commutative with the coordinate $x$. Now let's assume that
\begin{equation} \label{eq6}
\theta h = qjh\theta \quad \mbox{and} \quad h^3=0
\end{equation}
where $j=e^\frac{2\pi i}{3}$  $(i^2 = - 1)$ and
$$j^3 = 1 \quad \mbox{and} \quad j^2 + j +1 = 0, \quad \mbox{or} \quad (j + 1)^2 = j.$$
If the coordinate $\theta'$ in (\ref{eq4}) is substituted in the second equation in (\ref{eq1}), then it can be found that,
\begin{equation} \label{eq7}
\theta^3=0.
\end{equation}
Consequently, in the limit $q\to1$, the relations that define Z$_3$-graded $h$-superplane can be obtained as defined in \cite{5}.
\begin{equation} \label{eq8}
x\theta=\theta x+hx^2, \quad  \theta^3=0, \quad h^3=0.
\end{equation}
Now we can define Z$_3$-graded $h$-superplane.
\begin{defn}
Let $K\{x,\theta,h\}$ be a free algebra and $I_h$ is a two-sided ideal generated by $x\theta-\theta x-hx^2$, $\theta^3$ and $h^3$. The  Z$_3$-graded $h$-superplane $K_h[x,\theta,h]$ is defined as quotient algebra $K\{x,\theta,h\}/I_h$.
\end{defn}

\section{A Differential Calculus on the Z$_3$-graded $h$-Superplane}

In this section, we construct a differential calculus on the Z$_3$-graded $h$-superplane. This calculus involves functions on the $h$-superplane, first and second differentials and differential forms. We begin with the definition of the Z$_3$-graded differential calculus. Let $\hat{\alpha}$ denotes the grade of $\alpha$.
\begin{defn}
Let $A$ be an arbitrary associative (in general, noncommutative) algebra and let $\Gamma^{\wedge n}$ be a space of $n$-form $(n=0,1,2)$ and $A$-bimodule.
A Z$_3$-graded differential calculus on the algebra $A$ is a Z$_3$-graded algebra $\Gamma^\wedge=\bigoplus_{n=0}^2 \Gamma^{\wedge n}$ with a ${\mathbb C}$
linear exterior differential operator ${\sf d}$ which defines the map ${\sf d}:\Gamma^\wedge \longrightarrow \Gamma^\wedge$ of grade one. A generalization of a usual differential calculus leads to the rules:
\begin{eqnarray} \label{eq9}
{\sf d}^3 & =& 0, \qquad ({\sf d}^2\ne0) \nonumber\\
{\sf d}(\alpha\wedge\beta) &=& ({\sf d}\alpha)\wedge\beta + j^{\hat{\alpha}} \, \alpha\wedge({\sf d}\beta), \nonumber\\
{\sf d}^2(\alpha\wedge\beta) &=& ({\sf d}^2\alpha)\wedge\beta + (j^{\hat{\alpha}}+j^{\hat{d\alpha}}) \, (d\alpha)\wedge({\sf d}\beta) + j^{2\hat{\alpha}} \, \alpha\wedge({\sf d}^2\beta)
\end{eqnarray}
for $\alpha\in\Gamma^{\wedge n}$ $(n=0,1,2)$ and $\beta\in\Gamma^{\wedge}$.
\end{defn}

\subsection{Some conventions and assumptions}

The Z$_3$-graded quantum superplane underlies a noncommutative differential calculus on a smooth manifold with exterior differential {\sf d} satisfying
${\sf d}^3=0$. So, in order to construct the differential calculus on the Z$_3$-graded quantum superplane a linear operator {\sf d} which acts on the functions of the coordinates of the Z$_3$-graded quantum superplane must be defined. For the definition, it is sufficient to define the action of {\sf d} on the coordinates and on their products:

The linear operator {\sf d} applied to $x$ produces a 1-form whose Z$_3$-grade is one, by definition. Similarly, application of {\sf d} to $\theta$ produces a 1-form whose Z$_3$-grade is two. We shall denote the obtained quantities by ${\sf d}x$ and ${\sf d}\theta$, respectively. When the linear operator {\sf d} applied to ${\sf d}x$ (or twice by iteration to $x$) it will produce a new entity which we shall call a 1-form of grade two, denoted by ${\sf d}^2x$ and to
${\sf d}\theta$ produces a 1-form of grade zero, modulo 3, denoted by ${\sf d}^2\theta$. Finally, we require that ${\sf d}^3=0$.

With a simple arithmetic calculation from (\ref{eq4}), one find
\begin{equation} \label{eq11}
x' = x \quad \mbox{and} \quad \theta' = \theta + \frac{h}{q-1} \, x.
\end{equation}

If the exterior differential operator {\sf d} is acted on both sides of the relations given with (\ref{eq11}) and by using the Leibniz Rule defined in (\ref{eq9}) will give
\begin{equation} \label{eq12}
{\sf d}x'  ={\sf d}x \quad \mbox{and} \quad {\sf d}\theta' = {\sf d}\theta + j \, \frac{h}{q-1} \, {\sf d}x.
\end{equation}
If {\sf d} is acted on (\ref{eq12}) once more, then we get,
\begin{equation} \label{eq13}
{\sf d}^2 x' = {\sf d}^2x \quad \mbox{and} \quad {\sf d}^2\theta' = {\sf d}^2\theta + j^2 \, \frac{h}{q-1} \, {\sf d}^2x.
\end{equation}
In order to obtain commutation relations between $h$ and differentials of coordinate functions, along with the assumption,
\begin{equation} \label{eq14}
x \,h=h \,x \quad \mbox{and} \quad \theta \,h = qjh \,\theta
\end{equation}
and also we made another assumption which is
\begin{equation} \label{eq15}
{\sf d} \, h = jh \, {\sf d}.
\end{equation}
If we apply the exterior differential operator {\sf d} to the relations in (\ref{eq14}) and use (\ref{eq15}), then we find,
\begin{equation} \label{eq16}
{\sf d}x \, h = jh \, {\sf d}x \quad \mbox{and} \quad {\sf d}\theta \, h = qj^2h \, {\sf d}\theta.
\end{equation}
Applying {\sf d} to the relations in (\ref{eq16}), will give
\begin{equation} \label{eq17}
{\sf d}^2x \, h = j^2h \, {\sf d}^2x \quad \mbox{and} \quad {\sf d}^2\theta \, h = qh \, {\sf d}^2\theta.
\end{equation}
Equation (\ref{eq14})-(\ref{eq17}) will be used in the proceeding sections where commutation relations are needed between $x, \theta, {\sf d}x, {\sf d}\theta,{\sf d}^2x,{\sf d}^2\theta$ and ${\sf d}$.

\subsection{Relations between coordinate functions and their first order differentials}

In this subsection, possible relations between the coordinate functions of Z$_3$-graded $h$-superplane and their differentials will be obtained by the
help of relations given with (\ref{eq18}) in below.

We assume that the commutation relations between the coordinates of $q$-superplane and their differentials are in the following form:
\begin{eqnarray} \label{eq18}
x' \,{\sf d}x' &=& A \, {\sf d}x' \,x', \nonumber\\
x' \,{\sf d}\theta' &=& F_{11} \, {\sf d}\theta' \,x' + F_{12} \, {\sf d}x' \,\theta', \nonumber\\
\theta' \,{\sf d}x' &=& F_{21} \, {\sf d}x' \,\theta' + F_{22} \, {\sf d}\theta' \,x', \nonumber\\
\theta' \,{\sf d}\theta' &=& B \, {\sf d}\theta' \,\theta'. \end{eqnarray}

The coefficients $A$, $B$ and $F_{ik}$ are related the complex deformation parameters $q$ and $j$. In this work, we shall determine these coefficients finding
new relations on the Z$_3$-graded $h$-superplane.
\begin{thm}
$(q,j,h)$-deformed relations between the coordinate functions of Z$_3$-graded $h$-superplane and their differentials are in the form
\begin{eqnarray} \label{eq19}
x \,{\sf d}x &=& j^2 \, {\sf d}x \,x, \nonumber\\
x \,{\sf d}\theta &=& q \, {\sf d}\theta \, x + (j^2-1) \, {\sf d}x \, \theta + hj \, {\sf d}x \,x, \nonumber\\
\theta \,{\sf d}x &=& jq^{-1} \, {\sf d}x \,\theta - q^{-1}hj^2 \, {\sf d}x \,x, \nonumber\\
\theta \,{\sf d}\theta &=& j \, {\sf d}\theta \,\theta.
\end{eqnarray}
These relations will be rewritten at the limit $q\to1$ later.
\end{thm}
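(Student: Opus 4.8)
The plan is to obtain Theorem~\ref{eq19} as the differential counterpart of the coordinate contraction already performed in Section~2: I would first pin down the coefficients $A$, $B$, $F_{11}$, $F_{12}$, $F_{21}$, $F_{22}$ in the ansatz (\ref{eq18}) for the undeformed Z$_3$-graded $q$-superplane, and then substitute the inverse contraction formulas (\ref{eq11}) and (\ref{eq12}) into the resulting primed relations to read off the unprimed ones.

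To fix the coefficients, I would apply {\sf d} to the coordinate relation $x'\theta'=q\theta'x'$ and use the graded Leibniz rule in (\ref{eq9}); recalling $\hat{x'}=0$ and $\hat{\theta'}=1$, this yields ${\sf d}x'\,\theta'+x'\,{\sf d}\theta'=q\,{\sf d}\theta'\,x'+qj\,\theta'\,{\sf d}x'$. Feeding the ansatz (\ref{eq18}) into this and matching the coefficients of the independent monomials ${\sf d}x'\,\theta'$ and ${\sf d}\theta'\,x'$ produces the two linear constraints $1+F_{12}=qjF_{21}$ and $F_{11}=q+qjF_{22}$. Together with the requirement that the algebra generated by $x',\theta',{\sf d}x',{\sf d}\theta'$ be ordering-consistent (PBW) — equivalently, the covariance of the calculus inherited from the Z$_3$-graded $q$-superplane of \cite{4} — this singles out $A=j^2$, $B=j$, $F_{11}=q$, $F_{12}=j^2-1$, $F_{21}=jq^{-1}$, $F_{22}=0$, so that (\ref{eq18}) becomes an explicit set of relations in the primed variables.

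With the primed relations in hand, the second and main step is pure substitution. I would insert $x'=x$, $\theta'=\theta+\tfrac{h}{q-1}x$, ${\sf d}x'={\sf d}x$ and ${\sf d}\theta'={\sf d}\theta+j\tfrac{h}{q-1}{\sf d}x$ into each of the four primed relations, and then reorder everything into a fixed normal form using the $h$-commutation rules (\ref{eq14}) and (\ref{eq16}), namely $xh=hx$, $\theta h=qjh\theta$, ${\sf d}x\,h=jh\,{\sf d}x$ and ${\sf d}\theta\,h=qj^2h\,{\sf d}\theta$, together with $j^3=1$ and $h^3=0$. The first primed relation collapses immediately to $x\,{\sf d}x=j^2\,{\sf d}x\,x$. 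Substituting this back, the second and third primed relations generate, besides the expected $q$-commutators, extra terms carrying a factor $\tfrac{h}{q-1}$; the crucial point is that these combine as $\tfrac{j(q-1)}{q-1}=j$ and $j^2\tfrac{q^{-1}-1}{q-1}=-j^2q^{-1}$, so the apparent pole at $q=1$ cancels and one is left precisely with the finite $h$-corrections $hj\,{\sf d}x\,x$ and $-q^{-1}hj^2\,{\sf d}x\,x$ of (\ref{eq19}). The fourth relation requires feeding the already-derived lines back in: after substitution it produces terms in $h\,\theta\,{\sf d}x$, $h\,x\,{\sf d}\theta$ and $h^2\,{\sf d}x\,x$, which I would rewrite using the second and third lines of (\ref{eq19}); the $h^2$ contributions then cancel in pairs and the remaining $h$-terms match those on the right-hand side, leaving the clean relation $\theta\,{\sf d}\theta=j\,{\sf d}\theta\,\theta$.

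The step I expect to be the real obstacle is not any single identity but the bookkeeping in the last two relations: one must consistently move every $h$ to a chosen side using (\ref{eq14})--(\ref{eq16}), keep careful track of the powers of $j$ produced at each commutation, and verify that all singular $\tfrac{1}{q-1}$ and $\tfrac{1}{(q-1)^2}$ coefficients — as well as the spurious $h^2$ terms — cancel so that the $q\to1$ contraction stays finite. Establishing that the coefficients found in the first step are the unique consistent choice, rather than merely a possible one, is the other delicate ingredient, since the two Leibniz constraints alone underdetermine the six coefficients and the remaining freedom must be removed by the covariance/PBW requirement.
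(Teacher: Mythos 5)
Your overall strategy --- an undetermined-coefficients ansatz in the primed variables followed by substitution of the contraction formulas (\ref{eq11})--(\ref{eq12}) --- is the same as the paper's, up to the order of the two steps (the paper substitutes first, obtaining the intermediate relations (\ref{eq20})--(\ref{eq21}) with the coefficients still unknown, and only then fixes them). Your substitution computations are also correct: with the final values of the coefficients one indeed finds $F_{11}j+F_{12}j-Aj=j(q-1)$ and $F_{21}j+F_{22}j-A=j^2(q^{-1}-1)$, so the apparent poles at $q=1$ cancel exactly as you say, and the fourth relation stays clean because $K_1=K_2=K_3=0$.

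The gap is in how you fix the six coefficients. Your two Leibniz constraints $F_{11}=q(1+jF_{22})$ and $F_{12}=qjF_{21}-1$ agree with the paper's (\ref{eq23}), but everything beyond that you delegate to an unspecified ``covariance/PBW'' requirement, which you yourself flag as the delicate unaddressed ingredient. The paper closes this gap concretely with two inputs you never use. First, applying ${\sf d}$ to the nilpotency relation $\theta^3=0$ (equation (\ref{eq7})) yields, via the graded Leibniz rule, the condition $1+jB+j^2B^2=0$, whose admissible root is $B=j$; this is the step that actually breaks the degeneracy, and it has no counterpart in your argument (note that the relation $x'\theta'=q\theta'x'$ alone can never see $B$). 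Second, the requirement that the fourth relation $\theta\,{\sf d}\theta=B\,{\sf d}\theta\,\theta$ remain homogeneous after contraction forces $K_1=K_2=K_3=0$ in (\ref{eq21}); combined with $B=j$ and (\ref{eq23}) this gives $F_{22}=0$, $F_{11}=q$, $F_{21}=jq^{-1}$, $F_{12}=j^2-1$ and $A=j^2$ by elementary linear algebra (using $j^2+j+1=0$). Without these two inputs your coefficient values are asserted rather than derived, so the first half of your proof does not stand on its own; if you want to avoid the computation you would at minimum have to cite the explicit calculus of \cite{4} for the primed relations rather than appeal to an abstract consistency principle.
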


\begin{proof}
For completing the proof, relations given with (\ref{eq11}) and (\ref{eq12}) should be replaced with relations (\ref{eq18}) step by step. After some tedious calculations, relations (\ref{eq18}) would yield,
\begin{eqnarray}\label{eq20}
x \,{\sf d}x &=& A \, {\sf d}x \,x, \nonumber\\
x \,{\sf d}\theta &=& F_{11} \, {\sf d}\theta \,x + F_{12} \, {\sf d}x \,\theta + \frac{h}{q-1} \, (F_{11}j+F_{12}j-Aj) \, {\sf d}x \,x, \nonumber\\
\theta \,{\sf d}x &=& F_{21} \, {\sf d}x \,\theta + F_{22} \, {\sf d}\theta \,x + \frac{h}{q-1} \, (F_{21}j+F_{22}j-A) \, {\sf d}x \,x, \\
\theta \,{\sf d}\theta &=& B \, {\sf d}\theta \,\theta + \frac{h}{q-1} \, K_1 \, {\sf d}\theta \,x + \frac{h}{q-1} \, K_2 \, {\sf d}x \,\theta + \left(\frac{h}{q-1}\right)^2 \, K_3 \, {\sf d}x \,x \nonumber
\end{eqnarray}
where
\begin{eqnarray} \label{eq21}
K_{1} &=& Bj^2q-F_{22}j^2q-F_{11}, \nonumber\\
K_{2} &= & Bj-F_{21}j^2q-F_{12}, \nonumber\\
K_{3} &=& Bj^2-F_{21}q-F_{22}q+Aj^2q-F_{11}j-F_{12}j. \end{eqnarray}
So our problem is reduced to find the coefficients in relations (\ref{eq20}) and (\ref{eq21}). In order to do that, we will act {\sf d} to (\ref{eq5}) and (\ref{eq7}). Applying {\sf d} to (\ref{eq5}) will lead to
\begin{eqnarray} \label{eq22}
x \, {\sf d}\theta &=& (q+qjF_{22}){\sf d}\theta \, x+(qjF_{21}-1){\sf d}x \, \theta+\left[\frac{qjh}{q-1}(F_{21}j+F_{22}j-A)\right.\nonumber\\
&& +hj(A+1)\Big]{\sf d}x \, x.
\end{eqnarray}
Comparing equation (\ref{eq22}) with the second equation in (\ref{eq20}) would yield the equations
\begin{equation} \label{eq23}
F_{11}=q(1+jF_{22}) \quad \mbox{and} \quad F_{12}=qjF_{21}-1.
\end{equation}
If the exterior differential operator {\sf d} is acted on (\ref{eq7}), after some tedious calculations one can see that,
\[1+jB+j^2B^2=0.\]
Hence, it appears that $B=1$ or $B=j$. Since taking $B=1$ doesn't yield to a solution, we are going to take $B=j$. Other coefficients can be found by using $B=j$. Therefore, coefficients in (\ref{eq20}) are determined in terms of $q$ and $j$. Consequently, the relations given with (\ref{eq19}) is obtained.
\end{proof}

\subsection{Relations between coordinate functions and their second order differentials}

In this subsection, possible relations between the coordinate functions of Z$_3$-graded $h$-superplane and their second order differentials will be obtained.
\begin{lem}
Relation between ${\sf d}x$ and ${\sf d}\theta$ is
\begin{equation} \label{eq24}
{\sf d}x\wedge{\sf d}\theta = F \, {\sf d}\theta\wedge{\sf d}x+\frac{h}{q-1}(Fj-j^2)({\sf d}x\wedge{\sf d}x)
\end{equation}
where $F$ depends on $q$ and $j$.
\end{lem}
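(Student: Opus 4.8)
The plan is to follow exactly the strategy used in the proof of Theorem~3.4: rather than postulating a relation between ${\sf d}x$ and ${\sf d}\theta$ directly on the $h$-superplane, I would start from the corresponding relation on the undeformed Z$_3$-graded $q$-superplane and transport it through the linear change of coordinates. Concretely, on the $q$-superplane I would assume that the two first-order differentials satisfy a homogeneous quadratic relation of the form
\begin{equation*}
{\sf d}x'\wedge{\sf d}\theta' = F\,{\sf d}\theta'\wedge{\sf d}x',
\end{equation*}
where $F = F(q,j)$ is the single structure constant to be inherited. Note that, because here ${\sf d}^2\neq0$ and ${\sf d}^3=0$, the Z$_3$-graded setting does not force ${\sf d}x'\wedge{\sf d}x'=0$, so such a term is genuinely present in the calculus even though it need not enter the primed relation itself; this is exactly why a ${\sf d}x\wedge{\sf d}x$ contribution can survive in the deformed identity.

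Next I would substitute the change of variables for the differentials, namely ${\sf d}x'={\sf d}x$ and ${\sf d}\theta'={\sf d}\theta + j\frac{h}{q-1}{\sf d}x$ from (\ref{eq12}), into both sides of this relation and expand. The left-hand side produces ${\sf d}x\wedge{\sf d}\theta$ together with a cross term proportional to ${\sf d}x\wedge(h\,{\sf d}x)$, while the right-hand side produces $F\,{\sf d}\theta\wedge{\sf d}x$ together with a term proportional to $(h\,{\sf d}x)\wedge{\sf d}x$. Solving the resulting identity for ${\sf d}x\wedge{\sf d}\theta$ will isolate the stated relation, with the two cross terms combining into the single ${\sf d}x\wedge{\sf d}x$ contribution and with $F$ carried over unchanged, so that it depends only on $q$ and $j$.

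The one genuinely nontrivial point --- and the step I expect to be the main obstacle --- is the correct normal ordering of $h$ in the cross term ${\sf d}x\wedge(h\,{\sf d}x)$. Here I must move $h$ to the left past one factor of ${\sf d}x$ using the grading relation ${\sf d}x\,h = jh\,{\sf d}x$ from (\ref{eq16}), which yields ${\sf d}x\wedge(h\,{\sf d}x)=jh\,({\sf d}x\wedge{\sf d}x)$; by contrast the right-hand cross term already carries $h$ on the left, so $(h\,{\sf d}x)\wedge{\sf d}x = h\,({\sf d}x\wedge{\sf d}x)$ with no extra factor. This asymmetry is precisely what produces the coefficient $\frac{h}{q-1}(Fj - j^2)$ in (\ref{eq24}): the $j^2$ accumulates as $j\cdot j$ on the left-hand side (one $j$ from (\ref{eq12}), one from the reordering), whereas the right-hand side picks up only the single $j$ of (\ref{eq12}), giving $Fj$, and transposition flips the sign of the former. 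Keeping careful track of these $j$-powers (and of the fact that $j^{-1}=j^2$) is the crux of the computation; once the ordering is handled correctly, collecting the ${\sf d}x\wedge{\sf d}x$ terms and transposing gives (\ref{eq24}) at once.
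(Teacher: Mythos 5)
Your proposal is correct and follows essentially the same route as the paper: start from the homogeneous relation ${\sf d}x'\wedge{\sf d}\theta' = F\,{\sf d}\theta'\wedge{\sf d}x'$ on the $q$-superplane, substitute (\ref{eq12}), and reorder $h$ past ${\sf d}x$ via (\ref{eq16}) to pick up the extra factor of $j$ on the left-hand side only, which is exactly how the paper arrives at the coefficient $\frac{h}{q-1}(Fj-j^2)$. (Minor point: the transport-through-the-change-of-variables strategy you invoke is the one used for Theorem~3.2 and this lemma, not for Theorem~3.4, but that does not affect the argument.)
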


\begin{proof} In Z$_3$-graded $q$-superplane this relation is at the form of
\begin{equation*}
{\sf d}x'\wedge{\sf d}\theta' = F \, {\sf d}\theta'\wedge{\sf d}x'.
\end{equation*}
By using (\ref{eq12}), at the left side,
\begin{equation*}
{\sf d}x'\wedge{\sf d}\theta'={\sf d}x\wedge\left({\sf d}\theta+j\frac{h}{q-1}{\sf d}x\right)={\sf d}x\wedge{\sf d}\theta+j^2\frac{h}{q-1}({\sf d}x\wedge{\sf d}x).
\end{equation*}
and right side
\begin{equation*}
F \, {\sf d}\theta'\wedge{\sf d}x' = F \, \left({\sf d}\theta + \frac{jh}{q-1} \, {\sf d}x\right)\wedge{\sf d}x = F \, {\sf d}\theta\wedge{\sf d}x + Fj \frac{h}{q-1} \, ({\sf d}x\wedge{\sf d}x).
\end{equation*}
Equality of these two equations would give the relation
\begin{equation*}
{\sf d}x\wedge{\sf d}\theta = F \, {\sf d}\theta\wedge{\sf d}x + \frac{h}{q-1} \, (Fj-j^2) \, ({\sf d}x\wedge{\sf d}x).
\end{equation*}
Here $F$ will be determined in Theorem 3.4.
\end{proof}

\begin{thm}
$(q,j,h)$-deformed relations between the coordinate functions of Z$_3$-graded $h$-superplane and their differentials are in the form
\begin{eqnarray} \label{eq25}
x \,{\sf d}^2x &=& j^2 \, {\sf d}^2x \,x,\nonumber\\
x \,{\sf d}^2\theta &=& q \, {\sf d}^2\theta \,x + (j^2-1) \, {\sf d}^2x \, \theta + hj^2 \, {\sf d}^2x \,x,\nonumber\\
\theta \,{\sf d}^2x &=& q^{-1} \, {\sf d}^2x \,\theta - q^{-1}hj^2 \, {\sf d}^2x \,x, \nonumber\\
\theta \,{\sf d}^2\theta &=& {\sf d}^2\theta \,\theta.
\end{eqnarray}
and differentials
\begin{equation} \label{eq26}
{\sf d}x\wedge{\sf d}\theta = qj \, {\sf d}\theta\wedge{\sf d}x + hj^2 \, ({\sf d}x\wedge{\sf d}x).
\end{equation}
\end{thm}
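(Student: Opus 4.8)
The plan is to obtain all of (\ref{eq25}) and (\ref{eq26}) by differentiating the first-order relations (\ref{eq19}) already established in Theorem 3.2, rather than repeating the contraction argument from the primed variables. The governing principle is that ${\sf d}$ must be consistent with the bimodule structure, so applying ${\sf d}$ to each of the four relations in (\ref{eq19}) via the graded Leibniz rule (\ref{eq9}) must again yield valid relations of the calculus.

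Concretely, I would take each relation in (\ref{eq19}) in turn and apply ${\sf d}$, reading off the grades $\hat{x}=0$, $\widehat{{\sf d}x}=1$, $\widehat{{\sf d}\theta}=2$ and $\hat{\theta}=1$ when invoking the Leibniz rule. Differentiating $x\,{\sf d}x=j^2{\sf d}x\,x$ produces ${\sf d}x\wedge{\sf d}x+x\,{\sf d}^2x=j^2{\sf d}^2x\,x+{\sf d}x\wedge{\sf d}x$, the ${\sf d}x\wedge{\sf d}x$ terms cancel, and one is left with the first line of (\ref{eq25}); differentiating $\theta\,{\sf d}\theta=j{\sf d}\theta\,\theta$ in the same way cancels the ${\sf d}\theta\wedge{\sf d}\theta$ terms and yields the fourth line. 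The two off-diagonal relations are the richer ones: differentiating the second and third lines of (\ref{eq19}) each produces a relation in which the coordinate/second-differential monomials ($x\,{\sf d}^2\theta$, ${\sf d}^2\theta\,x$, ${\sf d}^2x\,\theta$, and so on) and the pure wedge monomials (${\sf d}x\wedge{\sf d}\theta$, ${\sf d}\theta\wedge{\sf d}x$, ${\sf d}x\wedge{\sf d}x$) appear together. Separating the two families by the linear independence of these monomials, the coordinate part gives the second and third lines of (\ref{eq25}), while the wedge part gives ${\sf d}x\wedge{\sf d}\theta=qj\,{\sf d}\theta\wedge{\sf d}x+\cdots$, i.e.\ it pins down the coefficient left open in Lemma 3.3 as $F=qj$.

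With $F=qj$ in hand I would substitute back into (\ref{eq24}): since $\frac{h}{q-1}(Fj-j^2)=\frac{h}{q-1}j^2(q-1)=hj^2$, Lemma 3.3 collapses exactly to (\ref{eq26}), and the wedge part coming out of the differentiated off-diagonal relations is checked to be consistent with this same value. A useful internal cross-check is that differentiating the second line and differentiating the third line must return the \emph{same} wedge relation, one being the inverse of the other; and indeed both give $F=qj$ once $j^{-2}=j$ is used.

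The step I expect to be the main obstacle is the bookkeeping of the $h$-terms. The factor $h$ is not a scalar: it carries grade two and satisfies ${\sf d}\,h=jh\,{\sf d}$ together with (\ref{eq16})--(\ref{eq17}), so when ${\sf d}$ is applied to a term such as $hj\,{\sf d}x\,x$ or $-q^{-1}hj^2\,{\sf d}x\,x$ one must first commute ${\sf d}$ past $h$ and then commute $h$ back past the resulting differentials, each time with the correct grade-dependent phase, before the monomials can be compared. Keeping these phases straight is precisely what turns the coefficient $hj$ of (\ref{eq19}) into the coefficient $hj^2$ in (\ref{eq25}) and produces the coefficients $-q^{-1}hj^2$ and $hj^2$ appearing in (\ref{eq25}) and (\ref{eq26}); an error of a single power of $j$ at this stage would be the easiest way to go wrong.
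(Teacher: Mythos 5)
Your proposal is correct and follows essentially the same route as the paper: apply ${\sf d}$ to each relation in (\ref{eq19}) via the graded Leibniz rule, cancel the wedge terms in the diagonal relations, and use the requirement that the wedge part of the off-diagonal relations vanish (equivalently, be consistent with Lemma 3.3) to fix $F=qj$, which then turns (\ref{eq24}) into (\ref{eq26}). Your attention to the phase ${\sf d}\,h=jh\,{\sf d}$, which converts the coefficient $hj$ of (\ref{eq19}) into $hj^2$ in (\ref{eq25}), is exactly the bookkeeping the paper performs implicitly.
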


\begin{proof} Applying exterior differential operator {\sf d} to (\ref{eq19}) would give us the desired results. For the first equation in (\ref{eq19}), left side,
$${\sf d}\wedge(x{\sf d}x) = {\sf d}x\wedge{\sf d}x + x \,{\sf d}^2x,$$
and right side
$$j^2 \, {\sf d}\wedge({\sf d}x \,x) = j^2 \, d^2x \,x + j^2j \, ({\sf d}x\wedge{\sf d}x).$$
From the equality of two sides,
$$x \,{\sf d}^2x = j^2 \, d^2x \,x$$
can be obtained. Using similar approach and making necessary arrangements to the second equation in (\ref{eq19}) would yield,
\begin{eqnarray*}
x \,{\sf d}^2\theta &=& q \, {\sf d}^2\theta~x + (j^2-1) \, {\sf d}^2x \,\theta+j^2h \, {\sf d}^2x \,x + (-Fj+qj^2) \, {\sf d}\theta\wedge{\sf d}x\\
&&+\left[\frac{h}{q-1}(1-Fj^2)+h\right]({\sf d}x\wedge{\sf d}x).
\end{eqnarray*}
Having first order differentials in the relation between coordinate functions and second order differentials, violates homogeneity. In order to have a homogeneous relation, coefficients of ${\sf d}\theta\wedge{\sf d}x$ and ${\sf d}x\wedge{\sf d}x$ should be zero. Taking $F=qj$ would make those coefficient zero. Hence, desired equation would become,
$$x \,{\sf d}^2\theta = q \, {\sf d}^2\theta \,x + (j^2-1) \, {\sf d}^2x \,\theta + hj^2 \, {\sf d}^2x \,x.$$
Also, the relation (\ref{eq24}) given in Lemma 3.3 would transform to (\ref{eq26}) by taking $F=qj$.
One can find the third and fourth equations in (\ref{eq25}) by applying exterior differential operator ${\sf d}$ to third and fourth equation in (\ref{eq19}).
\end{proof}

\subsection{Relations between first order differentials and second order differentials}

In this subsection, relations between first order differentials and second order differentials of the coordinate functions will be obtained by using relations
in (\ref{eq25}).
\begin{lem}
$(q,j,h)$-deformed relations between first order differentials and second order differentials of the coordinate functions of Z$_3$-graded $h$-superplane and their differentials are in the form
\begin{eqnarray} \label{eq27}
{\sf d}x\wedge{\sf d}^2x &=& j \, {\sf d}^2x\wedge{\sf d}x, \nonumber\\
{\sf d}x\wedge{\sf d}^2\theta &=& q \, {\sf d}^2\theta\wedge{\sf d}x + (j-j^2) \, {\sf d}^2x\wedge{\sf d}\theta + hj^2 \, {\sf d}^2x\wedge{\sf d}x, \nonumber\\
{\sf d}\theta\wedge{\sf d}^2x &=& q^{-1} j^2 \, {\sf d}^2x\wedge{\sf d}\theta - q^{-1}hj^2 \, {\sf d}^2x\wedge{\sf d}x, \nonumber\\
{\sf d}\theta\wedge{\sf d}^2\theta &=& {\sf d}^2\theta\wedge{\sf d}\theta. \end{eqnarray}
\end{lem}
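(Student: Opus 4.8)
The plan is to mirror the proof of Theorem 3.4: just as (\ref{eq25}) was obtained by applying ${\sf d}$ to (\ref{eq19}), the relations (\ref{eq27}) will be obtained by applying the exterior differential operator ${\sf d}$ to each of the four relations in (\ref{eq25}) and simplifying with the graded Leibniz rule (\ref{eq9}), the nilpotency ${\sf d}^3=0$, and the commutation rule ${\sf d}h=jh{\sf d}$ from (\ref{eq15}). The only bookkeeping needed is the list of grades $\hat{x}=0$, $\hat{\theta}=1$, $\widehat{{\sf d}x}=1$, $\widehat{{\sf d}\theta}=2$, $\widehat{{\sf d}^2x}=2$, $\widehat{{\sf d}^2\theta}=0$ and $\hat{h}=2$, which feed into the factor $j^{\hat{\alpha}}$ of the Leibniz rule.

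First I would treat the two diagonal relations, which are the cleanest. Applying ${\sf d}$ to $x\,{\sf d}^2x=j^2\,{\sf d}^2x\,x$, the Leibniz rule sends the left side to ${\sf d}x\wedge{\sf d}^2x+x\wedge{\sf d}^3x$ and the right side to $j^2({\sf d}^3x\,x+j^{\widehat{{\sf d}^2x}}\,{\sf d}^2x\wedge{\sf d}x)$; the ${\sf d}^3$-terms vanish by nilpotency, and $j^2\cdot j^2=j^4=j$ yields ${\sf d}x\wedge{\sf d}^2x=j\,{\sf d}^2x\wedge{\sf d}x$. The same one-line computation applied to $\theta\,{\sf d}^2\theta={\sf d}^2\theta\,\theta$ gives ${\sf d}\theta\wedge{\sf d}^2\theta={\sf d}^2\theta\wedge{\sf d}\theta$, since $\widehat{{\sf d}^2\theta}=0$ contributes a factor $j^0=1$.

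Then I would handle the two off-diagonal relations, where the $h$-terms enter. For $\theta\,{\sf d}^2x=q^{-1}\,{\sf d}^2x\,\theta-q^{-1}hj^2\,{\sf d}^2x\,x$, the left side becomes ${\sf d}\theta\wedge{\sf d}^2x$; on the right, the first term gives $q^{-1}j^{\widehat{{\sf d}^2x}}\,{\sf d}^2x\wedge{\sf d}\theta=q^{-1}j^2\,{\sf d}^2x\wedge{\sf d}\theta$, while for the $h$-term one first slides ${\sf d}$ past $h$ via (\ref{eq15}) to get $jh\,{\sf d}({\sf d}^2x\wedge x)=j^3h\,{\sf d}^2x\wedge{\sf d}x=h\,{\sf d}^2x\wedge{\sf d}x$, producing the coefficient $-q^{-1}hj^2$. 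This reproduces the third line of (\ref{eq27}); the second line follows identically from $x\,{\sf d}^2\theta=q\,{\sf d}^2\theta\,x+(j^2-1)\,{\sf d}^2x\,\theta+hj^2\,{\sf d}^2x\,x$, using $(j^2-1)j^2=j-j^2$ and the same ${\sf d}h=jh{\sf d}$ manoeuvre on the last term.

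I expect the main obstacle to be purely clerical: keeping the powers of $j$ straight, since each use of the Leibniz rule contributes a factor $j^{\hat{\alpha}}$ and each passage of ${\sf d}$ through $h$ contributes another factor $j$, and these must be reduced modulo $j^3=1$. Unlike Theorem 3.4, no free coefficient has to be fixed here: each relation in (\ref{eq25}) is already fully determined, so differentiating it yields the corresponding line of (\ref{eq27}) directly, with no homogeneity constraint left to impose. A useful consistency check is that every term on both sides is a genuine product of a first-order and a second-order differential, so no reduction by the pure two-form relation (\ref{eq26}) is required.
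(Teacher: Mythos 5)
Your proposal is correct and takes essentially the same route as the paper: the paper's proof of this lemma likewise applies ${\sf d}$ to each relation in (\ref{eq25}), carries out only the first line explicitly (obtaining ${\sf d}x\wedge{\sf d}^2x=j\,{\sf d}^2x\wedge{\sf d}x$ from $j^2\cdot j^2=j$), and dismisses the rest with ``the same approach.'' Your grade bookkeeping and the ${\sf d}h=jh\,{\sf d}$ manoeuvre on the $h$-terms reproduce the stated coefficients exactly, so you have simply filled in the steps the paper omits.
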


\begin{proof} For completing the proof, we are going to apply ${\sf d}$ exterior differential operator to the relations given with (\ref{eq25}). For the first equation, one can obtain left side,
\[{\sf d}\wedge(x \,{\sf d}^2x) = {\sf d}x\wedge{\sf d}^2x,\]
and right side
\[j^2 \, {\sf d}\wedge({\sf d}^2x \,x) = j \, {\sf d}^2x\wedge{\sf d}x.\]
From the equality of these equations, one can obtain
\[{\sf d}x\wedge{\sf d}^2x = j \, {\sf d}^2x\wedge{\sf d}x.\]
Other equations can be found by using same approach.
\end{proof}

\begin{cor}
The relationship between ${\sf d}^2x$ and ${\sf d}^2\theta$ as follows
\begin{equation} \label{eq28}
{\sf d}^2x\wedge{\sf d}^2\theta = qj^2 \, {\sf d}^2\theta\wedge{\sf d}^2x + jh \, {\sf d}^2x\wedge{\sf d}^2x.
\end{equation}
\end{cor}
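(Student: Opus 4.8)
The plan is to climb the differentiation ladder one more rung, exactly as in the previous results of this section: just as Lemma 3.5 was produced by applying {\sf d} to the second-order relations (\ref{eq25}), the relation (\ref{eq28}) between ${\sf d}^2x$ and ${\sf d}^2\theta$ should emerge by applying {\sf d} once more, now to the mixed relations (\ref{eq27}) of Lemma 3.5. The most direct route is to differentiate the second equation of (\ref{eq27}), namely ${\sf d}x\wedge{\sf d}^2\theta = q\,{\sf d}^2\theta\wedge{\sf d}x + (j-j^2)\,{\sf d}^2x\wedge{\sf d}\theta + hj^2\,{\sf d}^2x\wedge{\sf d}x$, because its left-hand side yields ${\sf d}^2x\wedge{\sf d}^2\theta$ directly.

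First I would record the grades involved: ${\sf d}x$ has grade $1$, both ${\sf d}\theta$ and ${\sf d}^2x$ have grade $2$, and ${\sf d}^2\theta$ has grade $0$ modulo $3$. Applying the graded Leibniz rule of (\ref{eq9}) to the left-hand side gives ${\sf d}({\sf d}x\wedge{\sf d}^2\theta) = {\sf d}^2x\wedge{\sf d}^2\theta + j\,{\sf d}x\wedge{\sf d}^3\theta$, and the second term vanishes since ${\sf d}^3=0$. Differentiating the right-hand side termwise in the same manner, every contribution carrying a factor ${\sf d}^3x$ or ${\sf d}^3\theta$ drops out, so that only the ${\sf d}^2\theta\wedge{\sf d}^2x$, ${\sf d}^2x\wedge{\sf d}^2\theta$ and ${\sf d}^2x\wedge{\sf d}^2x$ pieces survive, each weighted by a power of $j$ coming from the grading factor $j^{\hat\alpha}$.

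The step demanding the most care, and the one I expect to be the main obstacle, is differentiating the $h$-term $hj^2\,{\sf d}^2x\wedge{\sf d}x$: here one must commute {\sf d} past $h$ using the assumption (\ref{eq15}), ${\sf d}h=jh\,{\sf d}$, so that the scalar $j^2$ combines with the $j$ from (\ref{eq15}) into $j^3=1$ before the Leibniz rule is applied to ${\sf d}^2x\wedge{\sf d}x$. A misplaced grading factor anywhere in this bookkeeping would corrupt the coefficient of the $h$-term, so the powers of $j$ must be tracked scrupulously and reduced using $j^3=1$ (whence $j^4=j$). After collecting, the identity takes the form ${\sf d}^2x\wedge{\sf d}^2\theta = q\,{\sf d}^2\theta\wedge{\sf d}^2x + (1-j)\,{\sf d}^2x\wedge{\sf d}^2\theta + j^2 h\,{\sf d}^2x\wedge{\sf d}^2x$, in which ${\sf d}^2x\wedge{\sf d}^2\theta$ appears on both sides.

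Finally I would gather the ${\sf d}^2x\wedge{\sf d}^2\theta$ terms; the surviving coefficient is $1-(1-j)=j$, which is nonzero and hence invertible. Multiplying through by $j^{-1}=j^2$ and simplifying with $j^3=1$ produces exactly ${\sf d}^2x\wedge{\sf d}^2\theta = qj^2\,{\sf d}^2\theta\wedge{\sf d}^2x + jh\,{\sf d}^2x\wedge{\sf d}^2x$, which is (\ref{eq28}). As an internal consistency check, one could instead differentiate the third equation of (\ref{eq27}) for ${\sf d}\theta\wedge{\sf d}^2x$; this alternative route delivers ${\sf d}^2\theta\wedge{\sf d}^2x$ on the left and, after the analogous manipulations and another application of (\ref{eq15}), reproduces the same relation (\ref{eq28}), confirming the coefficients $qj^2$ and $jh$.
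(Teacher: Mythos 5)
Your proof is correct: applying {\sf d} to the second relation of (\ref{eq27}), tracking the grading factors $j^{\hat\alpha}$ and the rule ${\sf d}h=jh\,{\sf d}$, and then solving for ${\sf d}^2x\wedge{\sf d}^2\theta$ (the surviving coefficient $j$ being invertible) yields exactly (\ref{eq28}), and your cross-check via the third relation of (\ref{eq27}) confirms it. The paper gives no explicit proof of this corollary, but your argument is precisely the iterate-{\sf d} method it uses throughout Section 3, so it matches the intended derivation.
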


\subsubsection{$Z_3$-graded $h$-superplane and some $(h,j)$-deformed relations}

In this subsection, we will obtain commutation relations on $Z_3$-graded $h$-superplane, by taking the limit $q\to1$ at the previously found relations.
\begin{itemize}
\item In equation (\ref{eq8}), relations between coordinate functions of $Z_3$-graded $h$-superplane was found as,
\begin{equation*}
x\theta = \theta x+hx^2, \quad  \theta^3 = 0, \quad h^3 = 0.
\end{equation*}
\end{itemize}

Following the same approach and taking $q\to1$ at the equations (\ref{eq19}),(\ref{eq25})-(\ref{eq28}) would give us the following relations.
\begin{itemize}
\item Relations between coordinate functions and their first order differentials
\begin{eqnarray*}
x \,{\sf d}x &=& j^2 \, {\sf d}x \,x,\\
x \,{\sf d}\theta &=& {\sf d}\theta \,x + (j^2-1) \, {\sf d}x \,\theta + hj \, {\sf d}x \,x,\\
\theta \,{\sf d}x &=& j \, {\sf d}x \,\theta - hj^2 \, {\sf d}x \,x, \\
\theta \,{\sf d}\theta &=& j \, {\sf d}\theta \,\theta.
\end{eqnarray*}
\item Relations between coordinate functions and their second order differentials
\begin{eqnarray*}
x \,{\sf d}^2x &=& j^2 \, {\sf d}^2x \,x,\\
x \,{\sf d}^2\theta &=& {\sf d}^2\theta \,x + (j^2-1) \, {\sf d}^2x \,\theta + hj^2 \, {\sf d}^2x \,x,\\
\theta \,{\sf d}^2x &=& {\sf d}^2x \,\theta - hj^2 \, {\sf d}^2x \,x, \\
\theta \,{\sf d}^2\theta &=& {\sf d}^2\theta \,\theta.
\end{eqnarray*}
\item Relations between first order differentials
\begin{eqnarray*}
{\sf d}x\wedge{\sf d}\theta = j \, {\sf d}\theta\wedge{\sf d}x + hj^2 \, ({\sf d}x\wedge{\sf d}x).
\end{eqnarray*}
\item Relations between first order differentials and second order differentials
\begin{eqnarray*}
{\sf d}x\wedge{\sf d}^2x &=& j \, {\sf d}^2x\wedge{\sf d}x,\\
{\sf d}x\wedge{\sf d}^2\theta &=& {\sf d}^2\theta\wedge{\sf d}x + (j-j^2) \, {\sf d}^2x\wedge{\sf d}\theta + hj^2 \, {\sf d}^2x\wedge{\sf d}x,\\
{\sf d}\theta\wedge{\sf d}^2x &=& j^2 \, {\sf d}^2x\wedge{\sf d}\theta - hj^2 \, {\sf d}^2x\wedge{\sf d}x, \\
{\sf d}\theta\wedge{\sf d}^2\theta &=& {\sf d}^2\theta\wedge{\sf d}\theta. \end{eqnarray*}
\item Relations between second order differentials
\[{\sf d}^2x\wedge{\sf d}^2\theta = j^2 \, {\sf d}^2\theta\wedge{\sf d}^2x + jh \, {\sf d}^2x\wedge{\sf d}^2x.\]
\end{itemize}

\subsection{The Relations Between Partial Derivatives and First and Second Order Differentials}

In this section, we are going to obtain relations between the coordinate functions and their partial derivatives and also relations between first order differentials and their partial derivatives on the $Z_3$-graded $h$-superplane.
\begin{defn} If $f$ is a differentiable function of $x$ and $\theta$, then first order differential of $f$ is defined as
\begin{equation} \label{eq29}
{\sf d}f=({\sf d}x\partial_x+{\sf d}\theta\partial_\theta)f.
\end{equation}
\end{defn}

\subsubsection{Relations between the coordinate functions and partial derivatives}
In this subsection, commutation relations between the coordinate functions and partial derivatives will be given.
\begin{thm}
Commutation relations between the coordinate functions and partial derivatives are given with
\begin{eqnarray} \label{eq30}
\partial_xx &=& 1 + j^2x\partial_x + (j^2-1)\theta\partial_\theta+hx\partial_\theta, \nonumber\\
\partial_\theta x &=& qx\partial_\theta, \nonumber\\
\partial_x\theta &=& j^2q^{-1}(\theta-hx)\partial_x, \nonumber\\
\partial_\theta \theta &=& 1 + j^2\theta\partial_\theta.
\end{eqnarray}
\end{thm}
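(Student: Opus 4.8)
The plan is to exploit the defining formula for the first order differential, ${\sf d}f = ({\sf d}x\,\partial_x + {\sf d}\theta\,\partial_\theta)f$ of Definition 3.5, together with the graded Leibniz rule from (\ref{eq9}) and the coordinate--differential relations (\ref{eq19}) of Theorem 3.2. The basic idea is that for an arbitrary differentiable function $f$ the expression ${\sf d}(x f)$ can be computed in two independent ways, and equating the coefficients of the linearly independent $1$-forms ${\sf d}x$ and ${\sf d}\theta$ will force the desired operator identities.

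First I would apply {\sf d} to the product $x f$. On one hand, since $\hat x = 0$, the Leibniz rule gives ${\sf d}(xf) = ({\sf d}x)\,f + x\,{\sf d}f = ({\sf d}x)\,f + x\,{\sf d}x\,(\partial_x f) + x\,{\sf d}\theta\,(\partial_\theta f)$. I would then use the first two relations of (\ref{eq19}), namely $x\,{\sf d}x = j^2\,{\sf d}x\,x$ and $x\,{\sf d}\theta = q\,{\sf d}\theta\,x + (j^2-1)\,{\sf d}x\,\theta + hj\,{\sf d}x\,x$, to push every differential to the far left. On the other hand, the definition of {\sf d} applied directly to $xf$ reads ${\sf d}(xf) = {\sf d}x\,\partial_x(xf) + {\sf d}\theta\,\partial_\theta(xf)$. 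Comparing the coefficient of ${\sf d}\theta$ yields $\partial_\theta x = q x \partial_\theta$, while comparing the coefficient of ${\sf d}x$ yields the first relation $\partial_x x = 1 + j^2 x\partial_x + (j^2-1)\theta\partial_\theta + \cdots$, read as an identity of operators on the algebra of functions.

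Next I would carry out the identical procedure with $\theta f$ in place of $x f$. Here $\hat\theta = 1$, so the Leibniz rule contributes a grading factor $j$: ${\sf d}(\theta f) = ({\sf d}\theta)\,f + j\,\theta\,{\sf d}f$. Using the remaining two relations of (\ref{eq19}), $\theta\,{\sf d}x = jq^{-1}\,{\sf d}x\,\theta - q^{-1}hj^2\,{\sf d}x\,x$ and $\theta\,{\sf d}\theta = j\,{\sf d}\theta\,\theta$ (together with $j^3 = 1$) to commute $\theta$ through the differentials, and again matching the coefficients of ${\sf d}x$ and ${\sf d}\theta$ against ${\sf d}(\theta f) = {\sf d}x\,\partial_x(\theta f) + {\sf d}\theta\,\partial_\theta(\theta f)$, I expect to recover the third relation $\partial_x\theta = j^2 q^{-1}(\theta - hx)\partial_x$ and the fourth relation $\partial_\theta\theta = 1 + j^2\theta\partial_\theta$.

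The step I expect to be the main obstacle is the careful bookkeeping of the $h$-dependent terms. Because $h$ carries grade two and obeys the nontrivial relations $\theta h = qjh\theta$ and ${\sf d}h = jh\,{\sf d}$ of (\ref{eq14})--(\ref{eq15}), the $h$-terms produced when commuting differentials past $\theta$ pick up powers of $j$ that must be reduced using $j^3 = 1$, and a single misplaced factor changes the coefficient of $hx\partial_\theta$ in the first identity or of $hx$ in the third. A secondary point worth verifying is the implicit linear independence of ${\sf d}x$ and ${\sf d}\theta$, which is what legitimizes reading off the four operator relations by matching coefficients; once that is granted, the four equations in (\ref{eq30}) follow directly, and one may optionally cross-check them for consistency against the defining algebra relation (\ref{eq5}).
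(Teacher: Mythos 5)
Your proposal is correct and follows essentially the same route as the paper: substitute $xf$ (and then $\theta f$) into the defining formula (\ref{eq29}), expand via the graded Leibniz rule of (\ref{eq9}), push the differentials to the left using the relations (\ref{eq19}) (together with $h\,{\sf d}x=j^2\,{\sf d}x\,h$ to normal-order the $h$-terms), and match coefficients of ${\sf d}x$ and ${\sf d}\theta$. The paper only writes out the $xf$ computation explicitly and asserts the rest, so your explicit treatment of the $\theta f$ case and of the $j$-power bookkeeping for the $h$-terms simply fills in what the paper leaves to the reader.
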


\begin{proof}
Writing $xf$ instead of $f$ in (\ref{eq29}) would give the left side,
\begin{eqnarray*}
{\sf d}(xf) &=& {\sf d}xf+x{\sf d}f = {\sf d}xf+x({\sf d}x\partial_x + {\sf d}\theta\partial\theta)f \\
&=& \left[ {\sf d}x(1 + j^2x\partial_x + (j^2-1)\theta \partial_\theta + hx\partial_\theta){\sf d}\theta\partial_\theta x\right] f
\end{eqnarray*}
and right side
\begin{equation*}
{\sf d}(xf) = ({\sf d}x\partial_x x + {\sf d}\theta\partial_\theta x)f
\end{equation*}
from the equality of those two relations, desired equations can be obtained.
\end{proof}

\subsubsection{Relations between partial derivatives and first order differentials}

In this subsection, commutation relations between the first order differentials and partial derivatives will be given.
\begin{thm}
Commutation relations between the first order differentials and partial derivatives are given with:
\begin{eqnarray} \label{eq31}
\partial_x{\sf d}x &=& j{\sf d}x\partial_x - j^2h{\sf d}x\partial_\theta, \nonumber\\
\partial_x{\sf d}\theta &=& q^{-1}{\sf d}\theta\partial_x + q^{-1}jh{\sf d}x\partial_x, \nonumber\\
\partial_\theta{\sf d}x &=& qj^2{\sf d}x\partial_\theta, \nonumber\\
\partial_\theta{\sf d}\theta &=& (j^2-j){\sf d}x\partial_x + j^2{\sf d}\theta\partial_\theta.
\end{eqnarray}
\end{thm}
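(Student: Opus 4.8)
The plan is to obtain the relations (\ref{eq31}) by the same contraction strategy that produced (\ref{eq19}), (\ref{eq25}) and (\ref{eq27}): I start from the derivative--differential relations of the $\mathbb{Z}_3$-graded $q$-superplane written in the primed variables (these are the $h=0$ primed analogues of (\ref{eq31}), established in \cite{4} and \cite{6}) and rewrite them in the unprimed variables through the $g$-transformation. The relations (\ref{eq12}) already supply ${\sf d}x'={\sf d}x$ and ${\sf d}\theta'={\sf d}\theta+j\frac{h}{q-1}{\sf d}x$, so the one genuinely new ingredient I need is the transformation law of the derivatives $\partial_{x'},\partial_{\theta'}$ in terms of $\partial_x,\partial_\theta$.

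I would fix that law by demanding that the exterior differential is intrinsic, i.e. that the operator ${\sf d}$ introduced in (\ref{eq29}) does not depend on the choice of coordinates:
\[ {\sf d}x\,\partial_x+{\sf d}\theta\,\partial_\theta={\sf d}x'\,\partial_{x'}+{\sf d}\theta'\,\partial_{\theta'}. \]
Substituting (\ref{eq12}) on the right-hand side and matching the coefficients of the independent $1$-forms ${\sf d}x$ and ${\sf d}\theta$ yields $\partial_{\theta'}=\partial_\theta$ and $\partial_{x'}=\partial_x-j\frac{h}{q-1}\,\partial_\theta$. The precise $j$-power and the left/right placement of $h$ here must be read off from the grade-two character of $h$ together with the commutation rules (\ref{eq14})--(\ref{eq16}); this mirrors the extra factor $j$ that already occurs in (\ref{eq12}).

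With these four substitutions in hand, I would insert them into each of the four primed relations, expand, and normal-order. The reduction uses $j^3=1$ and $1+j+j^2=0$, the nilpotency $\theta^3=0$ and $h^3=0$, and the $h$-commutation relations (\ref{eq14})--(\ref{eq17}) to push every occurrence of $h$ to the left of the differentials and derivatives. Collecting the survivors line by line should reproduce (\ref{eq31}); in particular the $h$-proportional corrections $-j^2h\,{\sf d}x\,\partial_\theta$ in the first relation and $q^{-1}jh\,{\sf d}x\,\partial_x$ in the second are exactly the images of the $\frac{h}{q-1}$ pieces carried in by ${\sf d}\theta'$ and by $\partial_{x'}$, while the $h$-independent cross term $(j^2-j)\,{\sf d}x\,\partial_x$ in the last relation is inherited unchanged from the primed ($q$-superplane) relation.

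The main obstacle is precisely this $h$-bookkeeping. Since $h$ has grade two and braids nontrivially with the differentials and derivatives through (\ref{eq16})--(\ref{eq17}), each substitution spawns several $h$- and $h^2$-terms, and I must check that the spurious higher-order contributions cancel (using $h^3=0$ and $1+j+j^2=0$) and that the pieces mixing $\partial_x$ with $\partial_\theta$ assemble into exactly the coefficients displayed. As an independent confirmation I would apply each $\partial_\mu$ to the coordinate--differential relations (\ref{eq19}) and require the two ways of normal-ordering $\partial_\mu\,x^\nu\,{\sf d}x^\rho$---differentiating first via (\ref{eq30}) versus reordering the differential first via (\ref{eq19})---to coincide. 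This not only verifies (\ref{eq31}) but also explains why the corrections cannot be avoided: a diagonal relation such as $x\,{\sf d}x=j^2\,{\sf d}x\,x$ fixes its coefficient only up to an overall scale, and it is the off-diagonal and $h$-dependent relations of (\ref{eq19}), together with the normalization provided by the unit term in (\ref{eq30}), that determine all the coefficients uniquely.
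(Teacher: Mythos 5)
Your primary route is genuinely different from the paper's. The paper does not contract the primed derivative--differential relations at all: it posits a general ansatz (\ref{eq32}) with sixteen undetermined coefficients $A_1,\dots,A_{16}$, invokes the defining action (\ref{eq33}) of $\partial_x,\partial_\theta$, and then acts both derivatives on each of the four already-established relations (\ref{eq19}); consistency (e.g.\ ${\sf d}x - j^2\partial_x{\sf d}x\,x=0$ forcing $A_1=j$, $A_4=0$) fixes all coefficients. That is exactly the procedure you relegate to an ``independent confirmation,'' and it has the advantage of being self-contained: it needs only (\ref{eq19}) and (\ref{eq33}), not the primed relations from \cite{4} or a transformation law for the derivatives. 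Your contraction route is consistent with how the paper obtains (\ref{eq19}) and (\ref{eq25}), and the invariance requirement ${\sf d}x\,\partial_x+{\sf d}\theta\,\partial_\theta={\sf d}x'\,\partial_{x'}+{\sf d}\theta'\,\partial_{\theta'}$ is a legitimate way to fix $\partial_{x'},\partial_{\theta'}$; what it buys is a conceptual explanation of where the $h$-corrections in (\ref{eq31}) come from. But note that the one step you leave open --- the exact $j$-power in $\partial_{x'}=\partial_x - j^{?}\frac{h}{q-1}\partial_\theta$ --- is not a cosmetic detail: since $h\,{\sf d}x=j^2\,{\sf d}x\,h$ by (\ref{eq16}), collecting the coefficient of ${\sf d}x$ in your matching condition shifts that power, and an error there propagates into every $h$-term of (\ref{eq31}). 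You would need to pin this down explicitly (or fall back on your consistency check against (\ref{eq19}) and (\ref{eq30}), i.e.\ the paper's method) before the derivation is complete.
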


\begin{proof} First let's assume that these relations are at the form of
\begin{eqnarray} \label{eq32}
\partial_x{\sf d}x &=& A_1{\sf d}x\partial_x + A_2{\sf d}\theta\partial_\theta + A_3{\sf d}x\partial_\theta + A_4{\sf d}\theta\partial_x, \nonumber\\
\partial_x{\sf d}\theta &=& A_5{\sf d}\theta\partial_x + A_6{\sf d}x\partial_\theta + A_7{\sf d}x\partial_x + A_8{\sf d}\theta\partial_\theta, \nonumber\\
\partial_\theta{\sf d}x &=& A_9{\sf d}x\partial_\theta + A_{10}{\sf d}\theta\partial_x + A_{11}{\sf d}x\partial_x+A_{12}{\sf d}\theta\partial_\theta,\nonumber\\
\partial_\theta{\sf d}\theta &=& A_{13}{\sf d}x\partial_x + A_{14}{\sf d}\theta\partial_\theta + A_{15}{\sf d}x\partial_\theta + A_{16}{\sf d}\theta\partial_x.
\end{eqnarray}
From the definition of partial derivative operator, we know that:
\begin{equation} \label{eq33}
\partial_i(x^i{\sf d}x^k) = \delta_j^i\delta_l^k{\sf d}x^k, \quad (x^1 = x, \quad x^2 = \theta).
\end{equation}
Acting partial derivative operator to the first equation in (\ref{eq19}) would yield, $\partial_x(x~{\sf d}x-j^2 {\sf d}x~x)=0$. Using (\ref{eq33}) would give us,
${\sf d}x-j^2\partial_x{\sf d}x~x=0$. If this equation is written at the proper place in (\ref{eq32}), one can find
\begin{eqnarray*}
{\sf d}x - j^2\left[A_1{\sf d}x\partial_x+A_2{\sf d}\theta\partial_\theta+A_3{\sf d}x\partial_\theta+A_4{\sf d}\theta\partial_x\right] x &=& 0 \nonumber\\
{\sf d}x - j^2A_1{\sf d}x - j^2A_4{\sf d}\theta &=& 0 \nonumber\\
(1 - j^2A_1){\sf d}x - j^2A_4{\sf d}\theta &=& 0.
\end{eqnarray*}
From here it can be easily seen that $A_1=j$ and $A_4=0$. All $A_i$ coefficients can be obtained after some messy calculations by acting both $\partial_x$ and $\partial_\theta$ to the all equations in (\ref{eq19}).
\end{proof}

\subsubsection{Relations between partial derivatives}

In this subsection, commutation relations between partial derivatives will be given.
\begin{thm}
Relations between the partial derivatives are
\begin{eqnarray} \label{eq34}
\partial_x\partial_\theta &=& jq\partial_\theta\partial_x, \nonumber\\
\partial_\theta^3 &=& 0. \end{eqnarray}
\end{thm}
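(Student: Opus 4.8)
The plan is to establish the two assertions of (\ref{eq34}) separately, drawing only on the action of the derivatives on the coordinates recorded in (\ref{eq30}) together with the nilpotency $\theta^3=0$ of (\ref{eq7}). The guiding idea is that $\partial_\theta^3=0$ is the ``dual'' of $\theta^3=0$, while the commutation rule $\partial_x\partial_\theta=jq\,\partial_\theta\partial_x$ is fixed by demanding that the two orderings of the derivatives agree when evaluated on the algebra.

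For the nilpotency I would transfer the vanishing of $\theta^3$ to its dual derivative. From $\partial_\theta\theta=1+j^2\theta\partial_\theta$ in (\ref{eq30}) one reads off $\partial_\theta(1)=0$, $\partial_\theta(\theta)=1$ and $\partial_\theta(\theta^2)=(1+j^2)\theta=-j\theta$, using $1+j+j^2=0$; iterating gives $\partial_\theta^2(\theta^2)=-j$ and $\partial_\theta^3(\theta^2)=0$, and the same recursion returns $\partial_\theta(\theta^3)=0$ in agreement with $\theta^3=0$. Thus $\partial_\theta^3$ kills each of $1,\theta,\theta^2$. Since $\partial_\theta x=qx\partial_\theta$ lets $\partial_\theta$ move past any power of $x$ at the cost of a scalar, $\partial_\theta^3$ annihilates every monomial $x^a\theta^b$ with $b\le 2$; because $\theta^3=0$ these monomials span the function algebra, so $\partial_\theta^3=0$ holds as an operator identity.

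For the commutation rule I would posit a relation $\partial_x\partial_\theta=c\,\partial_\theta\partial_x$ (the two products carry the same Z$_3$-grade, so this is the natural grade-preserving ansatz) and pin down $c$ by evaluating both orderings on the mixed monomial $x\theta$. Applying (\ref{eq30}) gives $\partial_\theta(x\theta)=qx$, hence $\partial_x\partial_\theta(x\theta)=q$; in the other order $\partial_x(x\theta)=j^2\theta+hx$, hence $\partial_\theta\partial_x(x\theta)=j^2$ once the $h$-term is disposed of (it drops because $\partial_\theta x=qx\partial_\theta$ sends the constant part to zero and $\partial_\theta$ passes $h$ up to a scalar). Comparison of $q=c\,j^2$ then forces $c=qj^{-2}=jq$, which is exactly (\ref{eq34}). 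Promoting this to an operator identity amounts to repeating the evaluation on a spanning family $x^a\theta^b$ and checking that the same scalar recurs.

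The principal obstacle is the bookkeeping in this last step: one must verify that $\partial_x\partial_\theta=jq\,\partial_\theta\partial_x$ survives on all monomials $x^a\theta^b$, and in particular that every $h$-dependent contribution generated by $\partial_x x$ and $\partial_x\theta$ in (\ref{eq30}) cancels, leaving no residual $h$-term in the final relation. This is the same type of lengthy but routine calculation flagged elsewhere in the paper; it is best organized by using $\partial_\theta x=qx\partial_\theta$ and $\partial_x\theta=j^2q^{-1}(\theta-hx)\partial_x$ to push every derivative to the right of all coordinates before comparing coefficients.
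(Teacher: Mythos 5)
Your proposal is correct in substance but takes a genuinely different route from the paper's. The paper never evaluates the derivatives on monomials: it obtains (\ref{eq34}) as a consistency condition of the Z$_3$-graded calculus, by writing ${\sf d}f=({\sf d}x\,\partial_x+{\sf d}\theta\,\partial_\theta)f$, expanding ${\sf d}^3f=0$ into the eight triple products, reordering so that all differentials stand to the left of all partials by means of the already-established relations (\ref{eq31}) and (\ref{eq17}), imposing ${\sf d}x\wedge{\sf d}x\wedge{\sf d}x=0$, and then setting the coefficients of the independent homogeneous $3$-forms to zero; the relations $\partial_x\partial_\theta=jq\,\partial_\theta\partial_x$ and $\partial_\theta^3=0$ drop out of that coefficient comparison. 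Your route instead treats $\partial_x,\partial_\theta$ as operators on the function algebra and reads the relations off from their action via (\ref{eq30}); this is more elementary, makes the duality between $\partial_\theta^3=0$ and $\theta^3=0$ transparent, and your spot computations ($\partial_\theta(\theta^2)=-j\theta$, $\partial_x\partial_\theta(x\theta)=q$ versus $\partial_\theta\partial_x(x\theta)=j^2$, hence $c=qj^{-2}=jq$) are all correct. What your approach costs is closure: promoting the relations to operator identities requires checking them on a full spanning set $x^a\theta^b h^c$, and for that you need a commutation rule between the partial derivatives and the graded parameter $h$, which the paper never states explicitly --- your ``$\partial_\theta$ passes $h$ up to a scalar'' is a plausible extra hypothesis consistent with the grading, but it is an assumption, whereas in the paper's derivation the $h$-bookkeeping is handled automatically by (\ref{eq17}) and (\ref{eq31}). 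Given that the paper itself leaves its final step at the level of ``messy calculations and homogeneity,'' your proposal is at a comparable level of rigor, just anchored in a different (representation-theoretic rather than structural) starting point.
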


\begin{proof}
In $Z_3$-graded space we know that $d^3=0$. Hence,
\begin{eqnarray*}
{\sf d}^2f &=& \left[({\sf d}x\partial_x + {\sf d}\theta\partial_\theta)({\sf d}x\partial_x + {\sf d}\theta\partial_\theta)\right] f\\
&=& \left[{\sf d}x\partial_x{\sf d}x\partial_x + {\sf d}x\partial_x{\sf d}\theta\partial_\theta + {\sf d}\theta\partial_\theta{\sf d}x\partial_x +
    {\sf d}\theta\partial_\theta{\sf d}\theta\partial_\theta\right] f.\end{eqnarray*}
and
\begin{eqnarray*}
0 &=& d^3f \left[({\sf d}x\partial_x+{\sf d}\theta\partial_\theta)({\sf d}x\partial_x+{\sf d}\theta\partial_\theta)({\sf d}x\partial_x +
      {\sf d}\theta\partial_\theta)\right] f \nonumber\\
&=&\left[ ({\sf d}x\partial_x{\sf d}x\partial_x + {\sf d}x\partial_x{\sf d}\theta\partial_\theta + {\sf d}\theta\partial_\theta{\sf d}x\partial_x +
   {\sf d}\theta\partial_\theta{\sf d}\theta\partial_\theta)({\sf d}x\partial_x + {\sf d}\theta\partial_\theta)\right] f \nonumber\\
&=&[{\sf d}x\partial_x{\sf d}x\partial_x{\sf d}x\partial_x + {\sf d}x\partial_x{\sf d}x\partial_x{\sf d}\theta\partial_\theta +
   {\sf d}x\partial_x{\sf d}\theta\partial_\theta{\sf d}x\partial_x + {\sf d}x\partial_x{\sf d}\theta\partial_\theta{\sf d}\theta\partial_\theta \nonumber\\
&& + {\sf d}\theta\partial_\theta{\sf d}x\partial_x{\sf d}x\partial_x + {\sf d}\theta\partial_\theta{\sf d}x\partial_x{\sf d}\theta\partial_\theta +
    {\sf d}\theta\partial_\theta{\sf d}\theta\partial_\theta{\sf d}x\partial_x + {\sf d}\theta\partial_\theta{\sf d}\theta\partial_\theta{\sf d}\theta\partial_\theta] f.
\end{eqnarray*}
Hence, using (\ref{eq17}) and (\ref{eq31}) in this equation, assuming ${\sf d}x\wedge{\sf d}x\wedge{\sf d}x=0$, and with the help of homogeneity one can obtain desired results.
\end{proof}

\subsubsection{Some $(h,j)$-deformed relations for partial derivatives}

Taking limit $q\to1$ in the equations (\ref{eq30}), (\ref{eq31}) and (\ref{eq34}) would give us new relations.
\begin{itemize}
\item Relations of the coordinate functions with partial derivatives and relations between partial derivatives
\begin{eqnarray} \label{eq35}
\partial_xx &=& 1 + j^2x\partial_x + (j^2-1)\theta\partial_\theta + hx\partial_\theta, \nonumber\\
\partial_\theta x &=& x\partial_\theta, \nonumber\\
\partial_x\theta &=& j^2(\theta-hx)\partial_x,\nonumber\\
\partial_\theta \theta &=& 1 + j^2\theta\partial_\theta, \nonumber\\
\partial_x\partial_\theta &=& j\partial_\theta\partial_x, \nonumber\\
\partial_\theta^3 &=& 0.
\end{eqnarray}

\item Relations between first order differentials of coordinate functions and partial derivatives
\begin{eqnarray*}
\partial_x{\sf d}x &=& j{\sf d}x\partial_x - j^2h{\sf d}x\partial_\theta,\\
\partial_x{\sf d}\theta &=& {\sf d}\theta\partial_x + jh{\sf d}x\partial_x,\\
\partial_\theta{\sf d}x &=& j^2{\sf d}x\partial_\theta,\\
\partial_\theta{\sf d}\theta &=& (j^2-j){\sf d}x\partial_x + j^2{\sf d}\theta\partial_\theta. \end{eqnarray*}
\end{itemize}

\begin{defn}
The Z$_3$-graded quantum Weyl algebra ${\mathcal A}_{h,j}(2)$ is the unital algebra with four generators $x$, $\theta$, $\partial_x$, $\partial_\theta$ and
defining relations (\ref{eq8}) and (\ref{eq35}).
\end{defn}

\subsection{Cartan-Maurer Forms}
In this section Cartan-Maurer forms will be described and necessary commutation relations will be obtained. In $Z_3$-graded $q$ deformation, two-forms had been described with the help of a generators of an $\mathcal{A}$ algebra in \cite{4}.
\begin{eqnarray*}
w' &=& {\sf d}x'(x')^{-1},\\
u' &=& {\sf d}\theta'(x')^{-1}-{\sf d}x'(x')^{-1}\theta'(x')^{-1}.
\end{eqnarray*}

{\bf Note:} Cartan-Maurer forms in Z$_3$-graded $h$-deformation are
\begin{eqnarray}\label{eq36}
w &=& {\sf d}x \, x^{-1}, \nonumber\\
u &=& {\sf d}\theta \, x^{-1}-{\sf d}x \, x^{-1}\theta \, x^{-1}\end{eqnarray}
under the assumptions of
\begin{equation} \label{eq37}
u \, h = qj^2h \, u, \quad w \, h = jh \, w
\end{equation}

\subsubsection{Relations between the coordinate functions and Cartan-Maurer forms}

In this subsection, relations between the coordinate functions and Cartan-Maurer forms will be obtained.
\begin{lem}
Relations between the coordinate functions and Cartan-Maurer forms are
\begin{eqnarray} \label{eq38}
xw &=& j^2 \, wx, \nonumber\\
xu &=& q \, ux, \nonumber\\
\theta w &=& j \, w\theta, \nonumber\\
\theta u &=& qj \, u\theta + qh \, ux.
\end{eqnarray}
\end{lem}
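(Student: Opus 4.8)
The plan is to clear the inverse factors $x^{-1}$ appearing in the definitions (\ref{eq36}) and then reduce everything to the first-order relations (\ref{eq19}), the coordinate relation (\ref{eq5}), and the $h$-commutation rules (\ref{eq14}), (\ref{eq16}), (\ref{eq37}). First I would record the auxiliary identities obtained by conjugating the defining relations with $x^{-1}$: from $x\,{\sf d}x = j^2\,{\sf d}x\,x$ one gets ${\sf d}x\,x^{-1} = j^2\,x^{-1}\,{\sf d}x$, and from (\ref{eq5}) together with $xh=hx$ one gets $x\theta x^{-1} = q\theta + hx$ and hence $\theta x^{-1} = q\,x^{-1}\theta + h$. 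I would also note the bookkeeping identities $w\,x = {\sf d}x$ and, rewriting the second line of (\ref{eq36}) as $u = ({\sf d}\theta - w\theta)\,x^{-1}$, the relation $u\,x = {\sf d}\theta - w\theta$; these let me move freely between $w,u$ and the bare differentials.

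With these in hand the first two relations are short. The first is immediate: $xw = x\,{\sf d}x\,x^{-1} = j^2\,{\sf d}x\,x\,x^{-1} = j^2\,{\sf d}x = j^2\,wx$. For the third, I would expand $\theta w = \theta\,{\sf d}x\,x^{-1}$ using the third line of (\ref{eq19}), replace $\theta x^{-1}$ by $q\,x^{-1}\theta + h$, and use ${\sf d}x\,h = jh\,{\sf d}x$ from (\ref{eq16}); the two spurious $q^{-1}j^2 h\,{\sf d}x$ terms then cancel, leaving $\theta w = j\,w\theta$. For the second relation I would compute $x\,{\sf d}\theta\,x^{-1}$ from the second line of (\ref{eq19}) and $x\,{\sf d}x\,x^{-1}\theta x^{-1} = xw\theta x^{-1}$ using the already-proved $xw = j^2 wx$ together with $x\theta x^{-1} = q\theta + hx$ and $wh = jhw$; subtracting, the stray first-order and $h\,{\sf d}x$ terms collapse and one is left with $xu = q({\sf d}\theta - w\theta) = q\,ux$.

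The main obstacle is the fourth relation, since a direct expansion of $\theta u$ breeds nested $\theta x^{-1}$ factors and $\theta^2$ terms. The cleaner route I would take is to work with the combination $v := u\,x = {\sf d}\theta - w\theta$. Using $\theta\,{\sf d}\theta = j\,{\sf d}\theta\,\theta$ (the fourth line of (\ref{eq19})) and the already-established $\theta w = j\,w\theta$, one finds $\theta v = j\,{\sf d}\theta\,\theta - j\,w\theta^2 = j({\sf d}\theta - w\theta)\theta = j\,v\theta$, that is, $\theta(ux) = j(ux)\theta$. I would then convert back by writing $\theta u = j(ux)\,\theta x^{-1}$ and substituting $\theta x^{-1} = q\,x^{-1}\theta + h$: the first term gives $qj\,u\theta$ after cancelling $x\,x^{-1}$, while the second term, handled with $xh=hx$ and $uh = qj^2 hu$ from (\ref{eq37}), contributes exactly $j\cdot qj^2\,hux = qh\,ux$. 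This yields $\theta u = qj\,u\theta + qh\,ux$, as claimed. Throughout, the only genuine care is tracking the grade-two parameter $h$ through its phase-shifting commutation rules, which is where errors are most likely to appear.
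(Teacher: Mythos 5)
Your proposal is correct and follows essentially the same route as the paper: multiply the definitions (\ref{eq36}) by the coordinates, substitute the relations (\ref{eq19}) together with the localized identity $\theta x^{-1}=q\,x^{-1}\theta+h$ coming from (\ref{eq5}) and the $h$-commutation rules (\ref{eq14}), (\ref{eq16}), (\ref{eq37}). The paper only carries out the first relation explicitly and asserts the rest; your verification of the remaining three (including the clean $v=ux$ device for the fourth) fills in exactly the omitted computations, and the cancellations you describe do check out.
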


\begin{proof} Multiplying both sides of the first equation in (\ref{eq36}) with $x$ gives,
$$xw = x \,{\sf d}x \,x^{-1}.$$
Using convenient equation in relation system (\ref{eq19}) would give us,
$$xw = j^2 wx.$$
Other relations can be obtained, by using equations in (\ref{eq19}) and applying necessary transformations.
\end{proof}

\subsubsection{Relations between the Cartan-Maurer forms and first order differentials}

In this subsection, relations between the Cartan-Maurer forms and first order differentials will be given.
\begin{lem}
Relations between the Cartan-Maurer forms and first order differentials are,
\begin{eqnarray}\label{eq39}
w\wedge{\sf d}x &=& j \, {\sf d}x\wedge w, \nonumber\\
u\wedge{\sf d}x &=& q^{-1} \, {\sf d}x\wedge u, \nonumber\\
w\wedge{\sf d}\theta &=& j \, {\sf d}\theta\wedge w + (1-j) \, \theta x^{-1} {\sf d}x\wedge w, \nonumber\\
u\wedge{\sf d}\theta &=& q^{-1} \, {\sf d}\theta\wedge u + q^{-1}[(1-j) \, \theta x^{-1}-h] \,{\sf d}x\wedge u.
\end{eqnarray}
\end{lem}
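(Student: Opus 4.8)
The plan is to substitute the definitions (\ref{eq36}) of $w$ and $u$ into the left-hand side of each of the four relations, push every first order differential to the left past the coordinate factors using Theorem 3.2, and then recognize the result as a combination of $w$, $u$, ${\sf d}x$ and ${\sf d}\theta$. The first preparatory step I would carry out is to convert the relations (\ref{eq19}) into commutation rules involving $x^{-1}$. From $x\,{\sf d}x = j^2\,{\sf d}x\,x$ one gets ${\sf d}x\,x^{-1} = j^2\,x^{-1}{\sf d}x$, equivalently $x^{-1}{\sf d}x = j\,{\sf d}x\,x^{-1}$; solving the remaining lines of (\ref{eq19}) for $x^{-1}{\sf d}\theta$ and for the combination $x^{-1}\theta x^{-1}{\sf d}x$ produces the analogous rules for moving ${\sf d}x$ and ${\sf d}\theta$ through $x^{-1}$ and through $\theta x^{-1}$. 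These are exactly the ingredients needed, since $w$ and $u$ are built from ${\sf d}x\,x^{-1}$, ${\sf d}\theta\,x^{-1}$ and ${\sf d}x\,x^{-1}\theta x^{-1}$.

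I would treat the two $w$-relations first, as they are the lightest. For $w\wedge{\sf d}x = {\sf d}x\wedge(x^{-1}{\sf d}x)$ the single rule $x^{-1}{\sf d}x = j\,{\sf d}x\,x^{-1}$ immediately gives $j\,({\sf d}x\wedge{\sf d}x)x^{-1} = j\,{\sf d}x\wedge w$, which is the first line. The relation for $w\wedge{\sf d}\theta$ requires inverting the second line of (\ref{eq19}) to commute ${\sf d}\theta$ through $x^{-1}$; its inhomogeneous pieces $(j^2-1)\,{\sf d}x\,\theta$ and $hj\,{\sf d}x\,x$ are precisely what generate the correction term $(1-j)\,\theta x^{-1}{\sf d}x\wedge w$, once $\theta x^{-1}$ has been moved to the front.

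For the two $u$-relations I would expand $u = {\sf d}\theta\,x^{-1} - {\sf d}x\,x^{-1}\theta x^{-1}$ term by term. The ${\sf d}\theta\,x^{-1}$ piece behaves like the $w$-computation, while the second piece forces a reordering of type ${\sf d}x\wedge{\sf d}\theta$, at which point the key relation (\ref{eq26}), ${\sf d}x\wedge{\sf d}\theta = qj\,{\sf d}\theta\wedge{\sf d}x + hj^2\,({\sf d}x\wedge{\sf d}x)$, must be invoked together with the third line of (\ref{eq19}). Reassembling the outcome into $u$ and ${\sf d}x$ (respectively ${\sf d}\theta$) yields the coefficients $q^{-1}$ and, in the last line, the $-h$ contribution.

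The step I expect to be the main obstacle is keeping the grade-two parameter $h$ in a consistent position throughout the $u\wedge{\sf d}\theta$ computation. The $h$-term created by (\ref{eq26}) and the $h$-term carried by the second piece of $u$ arise in different positions and must be brought to a common normal form using (\ref{eq16}), ${\sf d}x\,h = jh\,{\sf d}x$, and the fact that $h$ commutes with $x$ by (\ref{eq14}); only after this bookkeeping, with $j^3 = 1$, do the apparently distinct $h$-contributions combine into the single coefficient $q^{-1}[(1-j)\theta x^{-1}-h]$. On the simpler line $u\wedge{\sf d}x$ I have checked that the two $h$-terms coincide exactly because $({\sf d}x\wedge{\sf d}x)\,h = j^2\,h\,({\sf d}x\wedge{\sf d}x)$, so that the $h$-contributions cancel and leave the clean relation $q^{-1}\,{\sf d}x\wedge u$; I expect the same mechanism to govern the surviving $h$-term in the last line. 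A final homogeneity argument, analogous to the one used to fix $F=qj$ in Theorem 3.4, would confirm that no further terms remain.
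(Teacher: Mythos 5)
Your proposal takes essentially the same route as the paper's own proof, which merely instructs the reader to combine (\ref{eq36}), (\ref{eq37}), (\ref{eq19}) and the reordering relation of Theorem 3.4 and ``make necessary arrangements'': you correctly identify the needed ingredients, namely the $x^{-1}$-commutation rules extracted from (\ref{eq19}), the rule (\ref{eq26}) for reordering ${\sf d}x\wedge{\sf d}\theta$, and the $h$-commutation rules (\ref{eq14})--(\ref{eq16}), which together reproduce the paper's assumption (\ref{eq37}). The one computation you carry out in full --- the cancellation of the two $h$-terms in $u\wedge{\sf d}x$ via $({\sf d}x\wedge{\sf d}x)h=j^{2}h({\sf d}x\wedge{\sf d}x)$ --- checks out, so your outline is in fact more explicit than the proof printed in the paper.
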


\begin{proof} These relations can be found by using, (\ref{eq36}), (\ref{eq37}), (\ref{eq19}) and (\ref{eq25}), and making necessary arrangements.
\end{proof}

\subsubsection{Relations between the Cartan-Maurer forms and second order differentials}

In this subsection, relations between the Cartan-Maurer forms and second order differentials will be given.
\begin{lem}
Relations between the Cartan-Maurer forms and second order differentials are,
\begin{eqnarray}\label{eq40}
w\wedge{\sf d}^2x &=& j^2 \, {\sf d}^2x\wedge w, \nonumber\\
u\wedge{\sf d}^2x &=& q^{-1} \, {\sf d}^2x\wedge u, \nonumber\\
w\wedge{\sf d}^2\theta &=& (j-j^2)q^{-1} \, d^2x\wedge u + d^2\theta\wedge w, \nonumber\\
u\wedge{\sf d}^2\theta &=& q^{-1} \, {\sf d}^2\theta\wedge u + \left[(j-j^2)x^{-1}\theta - q^{-1}j^2h\right] {\sf d}^2x\wedge u.
\end{eqnarray}
\end{lem}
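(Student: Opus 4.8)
The plan is to follow the same template used in the proofs of Lemmas 3.9 and 3.10: substitute the definitions (\ref{eq36}) of $w$ and $u$ into each left-hand side, push every coordinate function to the far right past the second-order differentials by means of the bimodule relations (\ref{eq25}), apply the form--form relations (\ref{eq27}), and then recollect $w$ and $u$ on the right. The one preliminary I would establish is the inverse form of the first relation in (\ref{eq25}): from $x\,{\sf d}^2x = j^2{\sf d}^2x\,x$ and $j^{-2}=j$ one gets $x^{-1}{\sf d}^2x = j\,{\sf d}^2x\,x^{-1}$, and analogous inverse relations for $\theta$ and for the $h$-relations (\ref{eq17}).

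For the first identity I would compute $w\wedge{\sf d}^2x = {\sf d}x\,x^{-1}\wedge{\sf d}^2x = j\,({\sf d}x\wedge{\sf d}^2x)\,x^{-1}$, then insert the first relation of (\ref{eq27}), ${\sf d}x\wedge{\sf d}^2x = j\,{\sf d}^2x\wedge{\sf d}x$, to obtain $j^2({\sf d}^2x\wedge{\sf d}x)x^{-1}$, and finally recollect $({\sf d}^2x\wedge{\sf d}x)x^{-1} = {\sf d}^2x\wedge w$, giving $w\wedge{\sf d}^2x = j^2\,{\sf d}^2x\wedge w$. The second identity $u\wedge{\sf d}^2x$ follows the same way: after expanding $u = {\sf d}\theta\,x^{-1} - {\sf d}x\,x^{-1}\theta\,x^{-1}$, I would move $x^{-1}$, $\theta$, $x^{-1}$ past ${\sf d}^2x$ using the first and third relations of (\ref{eq25}) and the third relation of (\ref{eq27}), keeping track that $h$ commutes with $x$ by (\ref{eq14}); the stray $h$-terms then cancel and the tail reassembles into $u$.

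For the third and fourth identities the differential ${\sf d}^2\theta$ mixes with ${\sf d}^2x$ through the second relation $x\,{\sf d}^2\theta = q\,{\sf d}^2\theta\,x + (j^2-1){\sf d}^2x\,\theta + hj^2{\sf d}^2x\,x$ of (\ref{eq25}) and through (\ref{eq27}). I would expand ${\sf d}\theta\,x^{-1}\wedge{\sf d}^2\theta$ together with the extra term $-{\sf d}x\,x^{-1}\theta x^{-1}\wedge{\sf d}^2\theta$, repeatedly commuting $x^{-1}$ and $\theta$ to the right and carrying along the $q$, $j$ and $h$ factors the relations produce, using (\ref{eq17}) and (\ref{eq37}) (for instance ${\sf d}^2x\,h = j^2h\,{\sf d}^2x$ and $u\,h = qj^2h\,u$) so that every surviving $h$-term ends up lined up with ${\sf d}^2x\wedge u$.

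The main obstacle is the fourth identity. After expansion one is left with several ${\sf d}^2\theta\wedge(\cdots)$ and ${\sf d}^2x\wedge(\cdots)$ contributions whose coordinate tails $x^{-1}$, $x^{-1}\theta$ and $h$ must be reorganized so that the right-hand side collapses exactly into $q^{-1}{\sf d}^2\theta\wedge u$ plus the single correction $[(j-j^2)x^{-1}\theta - q^{-1}j^2h]\,{\sf d}^2x\wedge u$. The delicate points are verifying that the spurious ${\sf d}^2x\wedge{\sf d}^2\theta$-type cross terms cancel --- where Corollary 3.7's relation (\ref{eq28}) between ${\sf d}^2x$ and ${\sf d}^2\theta$ is likely needed --- and checking that the accumulated $h$-coefficient reduces precisely to $-q^{-1}j^2h$. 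Once these cancellations and the inverse relations are in place, the first two identities are routine and the third follows by the same bookkeeping with fewer $h$-terms.
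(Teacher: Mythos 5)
Your proposal is correct and follows essentially the same route as the paper, whose own proof is only the one-line remark that the relations follow from (\ref{eq36}), (\ref{eq37}), (\ref{eq19}), (\ref{eq25}) and (\ref{eq26}) ``by making necessary arrangements''; your substitute-commute-recollect computation (including the inverse relations such as $x^{-1}{\sf d}^2x=j\,{\sf d}^2x\,x^{-1}$ and the explicit use of (\ref{eq27})) simply supplies the details the paper omits. The first identity you work out in full checks out, and the bookkeeping you describe for the remaining three is the intended argument.
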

\begin{proof} These relations can be found by using, (\ref{eq36}), (\ref{eq37}), (\ref{eq19}), (\ref{eq25}) and (\ref{eq26}) and making necessary arrangements.
\end{proof}

\subsubsection{Relations between the Cartan-Maurer forms}

In this subsection, relations between the Cartan-Maurer forms will be given.
\begin{thm}
Relations between the Cartan-Maurer forms are,
\begin{eqnarray}\label{eq41}
w\wedge u&=& u\wedge w, \nonumber\\
w\wedge w\wedge w &=& 0.
\end{eqnarray}
\end{thm}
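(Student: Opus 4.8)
The plan is to reduce both identities to relations already in hand by substituting the definitions (\ref{eq36}) of $w$ and $u$ and then pushing the scalar factors $x^{-1}$ through the first-order differentials. The two ingredients I will need are the $A$-bimodule axioms for $\wedge$ (so that a function sitting between two factors may be moved across the wedge, e.g. $(\omega f)\wedge\eta = \omega\wedge(f\eta)$) together with the ``inverse'' commutation rules extracted from (\ref{eq19}) and (\ref{eq38}). Concretely, from $x\,{\sf d}x = j^2\,{\sf d}x\,x$ one gets, after multiplying by $x^{-1}$ on both sides, ${\sf d}x\,x^{-1}=j^2 x^{-1}{\sf d}x$, i.e. $x^{-1}{\sf d}x = j\,{\sf d}x\,x^{-1}$; likewise $x u = q u x$ from (\ref{eq38}) gives $u\,x^{-1}=q\,x^{-1}u$. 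These are the only preparatory facts required.

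For the commutativity $w\wedge u = u\wedge w$, I would start from $u\wedge w = u\wedge({\sf d}x\,x^{-1}) = (u\wedge{\sf d}x)\,x^{-1}$, then apply the second line of (\ref{eq39}), $u\wedge{\sf d}x = q^{-1}\,{\sf d}x\wedge u$, to obtain $u\wedge w = q^{-1}\,{\sf d}x\wedge(u\,x^{-1})$. Substituting $u\,x^{-1}=q\,x^{-1}u$ cancels the $q$-factors and leaves ${\sf d}x\wedge(x^{-1}u) = ({\sf d}x\,x^{-1})\wedge u = w\wedge u$. Thus the first identity follows immediately from (\ref{eq38}) and (\ref{eq39}); notably the deformation parameter $q$ drops out entirely, as it must for $w$ and $u$ to commute.

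For the cubic relation $w\wedge w\wedge w = 0$, I would compute $w\wedge w$ first: using $x^{-1}{\sf d}x = j\,{\sf d}x\,x^{-1}$, one finds $w\wedge w = ({\sf d}x\,x^{-1})\wedge({\sf d}x\,x^{-1}) = j\,({\sf d}x\wedge{\sf d}x)\,x^{-2}$. Wedging once more with $w={\sf d}x\,x^{-1}$ and using the iterated rule $x^{-2}{\sf d}x = j^2\,{\sf d}x\,x^{-2}$ collects a factor $j^{3}=1$, so that $w\wedge w\wedge w = ({\sf d}x\wedge{\sf d}x\wedge{\sf d}x)\,x^{-3}$. The statement then follows from the cubic nilpotency ${\sf d}x\wedge{\sf d}x\wedge{\sf d}x=0$ of the grade-one differential, which is the standing Z$_3$-assumption already invoked in the proof of Theorem 3.11.

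The routine but error-prone part — and the only genuine obstacle — is the careful bookkeeping of the powers of $j$ while dragging $x^{-1}$ (and $x^{-2}$) past the differentials; the $A$-bilinearity of $\wedge$ must be invoked at every step so that functions are never illegitimately commuted with forms, and the iterated inverse relation $x^{-2}{\sf d}x = j^2\,{\sf d}x\,x^{-2}$ must be derived by applying $x^{-1}{\sf d}x = j\,{\sf d}x\,x^{-1}$ twice. Once these inverse rules are recorded, both computations are short and purely mechanical.
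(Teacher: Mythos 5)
Your computation is correct: the inverse rules $x^{-1}{\sf d}x=j\,{\sf d}x\,x^{-1}$ and $u\,x^{-1}=q\,x^{-1}u$ are derived properly, the $q$-factors cancel exactly as you say in $u\wedge w=q^{-1}\,{\sf d}x\wedge(u\,x^{-1})={\sf d}x\wedge(x^{-1}u)=w\wedge u$, and the cube collects $j\cdot j^{2}=1$ so that $w\wedge w\wedge w=({\sf d}x\wedge{\sf d}x\wedge{\sf d}x)\,x^{-3}$, which vanishes by the standing assumption ${\sf d}x\wedge{\sf d}x\wedge{\sf d}x=0$ (the same assumption the paper invokes when proving (\ref{eq34}); your pointer to ``Theorem 3.11'' is off by one, but the fact is indeed part of the framework rather than something either you or the paper derives). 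The route is slightly different from the paper's: the proof in the text cites (\ref{eq19}) and (\ref{eq26}) as its inputs, i.e.\ it works directly from the coordinate--differential relations and the ${\sf d}x\wedge{\sf d}\theta$ commutation, which forces one to expand $u={\sf d}\theta\,x^{-1}-{\sf d}x\,x^{-1}\theta\,x^{-1}$ explicitly and track the $\theta$-terms. You never touch (\ref{eq26}); instead you reuse the already-established relations (\ref{eq38}) and (\ref{eq39}), which package all of that information at the level of $u$ itself. This buys a genuinely shorter argument --- the first identity becomes a three-step manipulation in which the cancellation of the deformation parameter is transparent --- at the cost of depending on the correctness of the intermediate lemmas (which were themselves proved from (\ref{eq19}), (\ref{eq25}) and (\ref{eq26}), so nothing circular occurs). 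Both approaches rest on the same $A$-bimodule conventions for moving $x^{-1}$ across the wedge, which you are right to flag as the step where errors would creep in.
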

\begin{proof} These relations can be found by using, (\ref{eq19}) and (\ref{eq26}) and making necessary arrangements.
\end{proof}

\begin{cor}
The Cartan-Maurer forms are closed in the means that
\begin{equation}\label{eq42}
{\sf d}^2\wedge w = 0, \quad {\sf d}^2\wedge u =0.
\end{equation}
\end{cor}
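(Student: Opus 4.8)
The statement ${\sf d}^2\wedge w=0$ (and likewise for $u$) is to be read as ${\sf d}^2 w=0$, i.e.\ the Cartan--Maurer forms lie in the kernel of the grade-two operator ${\sf d}^2$; despite ${\sf d}^2\ne0$ in general, this is the sense in which they are ``closed.'' The plan is to apply ${\sf d}^2$ directly to the expressions (\ref{eq36}), using the second-order Leibniz rule in (\ref{eq9}), the nilpotency ${\sf d}^3=0$, the coordinate--differential relations (\ref{eq19}) and (\ref{eq25})--(\ref{eq27}), and the form identities (\ref{eq41}).

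First I would record the action of ${\sf d}$ and ${\sf d}^2$ on $x^{-1}$. Differentiating $x\,x^{-1}=1$ with the Leibniz rule of (\ref{eq9}) (and $\hat{x}=0$) gives ${\sf d}(x^{-1})=-x^{-1}({\sf d}x)x^{-1}$. Applying ${\sf d}^2$ to $x\,x^{-1}=1$ with the second-order rule, whose cross-term coefficient here is $j^{\hat{x}}+j^{\widehat{{\sf d}x}}=1+j=-j^2$, and substituting the previous formula, yields ${\sf d}^2(x^{-1})=-x^{-1}({\sf d}^2x)x^{-1}-j^2x^{-1}(w\wedge w)$, where $w={\sf d}x\,x^{-1}$.

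Next I would expand ${\sf d}^2w={\sf d}^2({\sf d}x\,x^{-1})$ by the second-order Leibniz rule with $\alpha={\sf d}x$ and $\beta=x^{-1}$. The leading term drops out because ${\sf d}^2({\sf d}x)={\sf d}^3x=0$; the cross term carries $j^{\hat\alpha}+j^{\widehat{{\sf d}\alpha}}=j+j^2=-1$, and the last term carries $j^{2\hat\alpha}=j^2$. Inserting the formulas for ${\sf d}(x^{-1})$ and ${\sf d}^2(x^{-1})$ reduces ${\sf d}^2w$ to $({\sf d}^2x)x^{-1}({\sf d}x)x^{-1}-j^2({\sf d}x)x^{-1}({\sf d}^2x)x^{-1}-j\,w\wedge w\wedge w$. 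The last summand vanishes by (\ref{eq41}). For the first two, I would push each function $x^{-1}$ past the adjacent differential using $({\sf d}x)x^{-1}=j^2x^{-1}{\sf d}x$ and $({\sf d}^2x)x^{-1}=j^2x^{-1}{\sf d}^2x$ (read off from (\ref{eq19}) and (\ref{eq25})), and then transpose the two one-forms via ${\sf d}x\wedge{\sf d}^2x=j\,{\sf d}^2x\wedge{\sf d}x$ from (\ref{eq27}); a short count of the resulting powers of $j$ shows the two terms are equal, so their difference is zero and ${\sf d}^2w=0$.

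The computation for $u={\sf d}\theta\,x^{-1}-{\sf d}x\,x^{-1}\theta\,x^{-1}$ follows the same template, but is substantially longer: each product must be differentiated twice and one must additionally track ${\sf d}\theta$, ${\sf d}^2\theta$ and the factors $\theta$ by means of the remaining relations in (\ref{eq19}), (\ref{eq25}), (\ref{eq26}) and (\ref{eq27}), together with (\ref{eq41}) and the previously established form relations (\ref{eq38})--(\ref{eq40}). I expect the main obstacle to be exactly this $u$ calculation: the Z$_3$-graded factors $j^{\hat\alpha}$ and the mixed coefficient $j^{\hat\alpha}+j^{\widehat{{\sf d}\alpha}}$ must be attached to each factor in the correct order, and the numerous first-order differential terms generated along the way must be shown, by homogeneity and (\ref{eq41}), to collect into triple wedge products in $w$ and $u$ that vanish. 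Once this bookkeeping is under control, the same cancellation mechanism that killed ${\sf d}^2w$ forces ${\sf d}^2u=0$.
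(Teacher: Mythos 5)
The paper states this corollary with no proof at all, so there is nothing to compare your argument against; what you have written already goes beyond what the authors supply. Your treatment of ${\sf d}^2w$ is correct and essentially complete: the formulas ${\sf d}(x^{-1})=-x^{-1}({\sf d}x)x^{-1}$ and ${\sf d}^2(x^{-1})=-x^{-1}({\sf d}^2x)x^{-1}-j^2x^{-1}(w\wedge w)$ follow from the Z$_3$-graded Leibniz rules in (\ref{eq9}) with the coefficients $1+j=-j^2$ and $j+j^2=-1$ exactly as you state, and the residual pair $({\sf d}^2x)x^{-1}({\sf d}x)x^{-1}-j^2({\sf d}x)x^{-1}({\sf d}^2x)x^{-1}$ does vanish: both summands reduce to $j^2x^{-1}({\sf d}^2x\wedge{\sf d}x)x^{-1}$ after commuting $x^{-1}$ past the differentials via (\ref{eq19}) and (\ref{eq25}) and transposing with (\ref{eq27}), while the leftover $-j\,w\wedge w\wedge w$ dies by (\ref{eq41}).

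The gap is the second half. For $u$ you only describe a template and concede that the bookkeeping is the main obstacle; that is an accurate self-assessment, but it means ${\sf d}^2\wedge u=0$ is not actually proved. The $u$ computation is not a routine repetition of the $w$ one: differentiating ${\sf d}\theta\,x^{-1}$ and ${\sf d}x\,x^{-1}\theta\,x^{-1}$ twice produces terms involving ${\sf d}\theta$, ${\sf d}^2\theta$ and explicit factors of $h$ through (\ref{eq26}) and the inhomogeneous terms of (\ref{eq19}) and (\ref{eq25}), and the cancellation requires not only $w\wedge w\wedge w=0$ and $w\wedge u=u\wedge w$ from (\ref{eq41}) but also the commutation of $h$ with the forms from (\ref{eq37}) and, implicitly, mixed triple products such as $w\wedge w\wedge u$ that the paper never records. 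Until those terms are exhibited and shown to collect into vanishing combinations, the second identity rests on an expectation rather than a derivation.
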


\subsubsection{Some $(h,j)$-deformed relations for Cartan-Maurer forms}

Taking limit $q\to1$ in the equations (\ref{eq38}-\ref{eq40}) would give us new relations.
\begin{itemize}
\item Relations between the coordinate functions and Cartan-Maurer forms
\begin{eqnarray*}
xw &=& j^2wx, \\
xu &=& ux, \\
\theta w &=& j w\theta, \\
\theta u &=& j u\theta + h ux.
\end{eqnarray*}

\item Relations between the Cartan-Maurer forms and first order differentials
\begin{eqnarray*}
w\wedge{\sf d}x &=& j {\sf d}x\wedge w, \\
u\wedge{\sf d}x &=& {\sf d}x\wedge u, \\
w\wedge{\sf d}\theta &=& j{\sf d}\theta\wedge w + (1-j) \, \theta x^{-1} {\sf d}x\wedge w, \\
u\wedge{\sf d}\theta &=& {\sf d}\theta\wedge u - h \, {\sf d}x\wedge u + [(1-j) \, \theta x^{-1}-h] \,{\sf d}x\wedge u.
\end{eqnarray*}

\item Relations between the Cartan-Maurer forms and second order differentials
\begin{eqnarray*}
w\wedge{\sf d}^2x &=& j^2 \, {\sf d}^2x\wedge w, \\
u\wedge{\sf d}^2x &=& {\sf d}^2x\wedge u, \\
w\wedge{\sf d}^2\theta &=& (j-j^2) \, d^2x\wedge u+d^2\theta\wedge w, \\
u\wedge{\sf d}^2\theta &=& {\sf d}^2\theta\wedge u + \left[(j-j^2) \, x^{-1}\theta - j^2h\right] {\sf d}^2x\wedge u.
\end{eqnarray*}

\item Relations between the Cartan-Maurer forms
\begin{eqnarray*}
w\wedge u&=& u\wedge w, \\
w\wedge w\wedge w &=& 0.
\end{eqnarray*}
\end{itemize}

\section{A Z$_3$-graded $(h,j)$-deformed quantum (super)group}

In this section, we will consider the $Z_3$-graded structures of the $(h,j)$-deformed quantum 2x2-supermatrices. We had given commutation relations between coordinates in the $h$-superplane in (\ref{eq8}). Here the coordinate $x$ with the respect to the $Z_3$-grading is of grade 0 and the coordinate $\theta$ with respect to the $Z_3$-grading is of grade 1.

The noncommutative space ${\mathbb R}_h(1|1)$ with the function algebra
\[O({\mathbb R}_h(1|1))=K\{x,\theta\}/(x\theta-\theta x-hx^2, \quad \theta^3, \quad h^3)\]
is called Z$_3$-graded $h$-superplane. The noncommutative space ${\mathbb R}_{h,j}^*(1|1)$ with the function algebra
\[O({\mathbb R}_{h,j}^*(1|1))=K\{\varphi,y\}/(\varphi y-jy\varphi-hj^2\varphi^2, \quad \varphi^3, \quad h^3)\]
is called dual Z$_3$-graded $h$-superplane.

Under these definitions, we have
\begin{equation} \label{eq43}
{\mathcal R}_h(1|1)=\left\{\begin{pmatrix} x \\ \theta \end{pmatrix}: x\theta=\theta x+hx^2, \quad \theta^3=0, \quad h^3=0.\right\}
\end{equation}
and
\begin{equation} \label{eq44}
{\mathcal R}^*_{q,j}(1|1) \left\{\begin{pmatrix} \varphi \\ y \end{pmatrix}: \varphi y=jy\varphi+hj^2\varphi^2, \quad \varphi^3=0, \quad h^3=0.\right\}
\end{equation}
Here,
$$ \left[{\mathcal R}_h(1|1)\right]^* = {\mathcal R}^*_{h,j}(1|1).$$

Let $T$ be a $2x2$ supermatrix in $Z_3$-graded superspace
\begin{equation}  \label{eq45}
T=\begin{pmatrix} a \quad \beta \\ \gamma \quad d \end{pmatrix}
\end{equation}
where $a$ and $d$ with respect to the $Z_3$-grading is of grade $0$, and $\beta$ and $\gamma$  with the respect to the $Z_3$-grading is of grade $2$ and grade $1$, respectively. We now consider linear transformations with the following properties:
\begin{equation}  \label{eq46}
T: {\mathcal R}_h(1|1)\longrightarrow{\mathcal R}_h(1|1), \quad T:{\mathcal R}^*_{h,j}(1|1)\longrightarrow{\mathcal R}^*_{h,j}(1|1).
\end{equation}
We assume that the entries of $T$ are $j$-commutative with the elements of ${\mathcal R}_h(1|1)$ and ${\mathcal R}^*_{h,j}(1|1)$, i.e. for example,
\[ax=xa, \quad \theta\beta=j^2\beta\theta, \, etc.\]
As a consequence of linear transformations in (\ref{eq46}), the elements
\begin{equation} \label{eq47}
\tilde{x}=ax+\beta\theta, \quad \tilde{\theta}=\gamma x+d\theta
\end{equation}
should satisfy the relations in (\ref{eq43}):
\[\tilde{x}\tilde{\theta}=\tilde{\theta}\tilde{x}+h\tilde{x}^2, \quad \tilde{\theta}^3=0. \]
Using these relations one has,
\[a\gamma=\gamma a+h[a^2-ad+\gamma\beta+j^2ha\beta], \quad d\gamma=\gamma d,\]
\[\beta d=j^2[d\beta+h\beta^2], \quad \gamma^3=-hj\left[(j-1)\gamma^2d+2jh\gamma d^2\right].\]
Similarly, the elements
\begin{equation} \label{eq48}
\tilde{\varphi}=a\varphi+j^2\beta y, \quad \tilde{y}=jy\varphi+dy
\end{equation}
must satisfy the relations in (\ref{eq44}). Using these relations, one has
\[a\beta=j\beta a, \quad \beta^3=0.\]
Also if we use the second relation in (\ref{eq19}),
\[\tilde{x}\tilde{y}=\tilde{y}\tilde{x}+(j^2-1)\tilde{\varphi}\tilde{\theta}+hj\tilde{\varphi}\tilde{x},\]
we have,
\[ad=da+(1-j)\beta\gamma+h\beta a, \quad \beta \gamma= \gamma\beta+ha\beta. \]

Consequently we have the following commutation relations between the matrix elements of $T$:
\begin{eqnarray} \label{eq49}
a\beta &= & j \,\beta a, \nonumber\\
a\gamma &=& \gamma a+h\left[a^2-ad+\gamma\beta+j^2ha\beta\right], \nonumber\\
d\beta &= & j \, \beta d + jh \, \beta^2, \nonumber\\
d\gamma &=& \gamma d, \nonumber\\
\beta^3 &=& 0,\qquad \gamma^3 = -hj\left[(j-1)\gamma^2d + 2jh \,\gamma d^2\right], \nonumber\\
\beta \gamma &=& \gamma\beta + h \, a\beta, \nonumber\\
ad &=& da + (1-j) \, \beta\gamma +h \, \beta a.
\end{eqnarray}

Super inverse and super determinant of  of $T$ is defined in \cite{5}, as follows.
\begin{equation} \label{eq50}
T^{-1}=\begin{pmatrix} A & -a^{-1}\beta d^{-1}-a^{-1}\beta d^{-1}\gamma a^{-1}\beta d^{-1} \cr
-d^{-1}\gamma a^{-1}-d^{-1}\gamma a^{-1}\beta d^{-1}\gamma a^{-1} & D \end{pmatrix}\end{equation}
where
\[A= a^{-1}+a^{-1}\beta d^{-1}\gamma a^{-1}+a^{-1}\beta d^{-1}\gamma a^{-1} \beta d^{-1}\gamma a^{-1}, \]
\[D=d^{-1}+d^{-1}\gamma a^{-1}\beta d^{-1}+d^{-1}
\gamma a^{-1}\beta d^{-1}\gamma a^{-1}\beta d^{-1}\]
and
\begin{equation} \label{eq51}
D_{h,j}(T)=ad^{-1}+ad^{-1}\gamma a^{-1}\beta d^{-1}+ad^{-1}\gamma a^{-1}\beta d^{-1}\gamma a^{-1}\beta d^{-1}.
\end{equation}

\begin{defn} Z$_3$-graded $(h,j)$-deformed supergroup is a group that consists $T$ matrices that satisfy the following three conditions.
\begin{itemize}
\item Elements of a matrix $T$ satisfies relations given with (\ref{eq49}),
\item $T$ matrix has the inverse given with (\ref{eq50}),
\item $T$ matrix has the super determinant given with (\ref{eq51}).
\end{itemize}
\end{defn}

This defined group, will be denoted with GL$_{h,j}(1|1)$. It can be shown that the Z$_3$-graded quantum supergroup GL$_{h,j}(1|1)$ is a Z$_3$-graded Hopf (super) algebra. A study on this group and on differential geometry of this group is in progress.

\section*{Acknowledgments}

This work was supported in part by {\bf TBTAK} the Turkish Scientific and Technical Research Council.

\end{document}